\newcommand{\sumlim}[2]{\sum\limits_{#1}^{#2}}
\newcommand{\R}{\mathbb R}
\newcommand{\N}{\mathbb N}
\newcommand{\Z}{\mathbb Z}
\newcommand{\cK}{{\mathcal K}}
\newcommand{\cO}{{\mathcal O}}
\providecommand{\keywords}[1]
{
  \small	
  \textbf{Keywords.} #1
}
\providecommand{\AMS}[1]
{
  \small	
  \textbf{AMS subject classifications.} #1
}
\begin{document}

\numberwithin{equation}{section}
\newtheorem{theo}{Theorem}[section]
\newtheorem{prop}[theo]{Proposition}
\newtheorem{note}[theo]{Remark}
\newtheorem{lem}[theo]{Lemma}
\newtheorem{cor}[theo]{Corollary}
\newtheorem{definition}[theo]{Definition}
\newtheorem{assumption}{Assumption}

\title{Discretization error cancellation in the plane-wave approximation of periodic Hamiltonians with Coulomb singularities}
\author{Mi-Song Dupuy\thanks{Zentrum Mathematik, Technische Universit\"at M\"unchen, Boltzmannstra\ss e 3, 85747 Garching, Germany \\
(Email: \url{dupuy@ma.tum.de})}}

%\emails{dupuy@math.univ-paris-diderot.fr}
%\affil{Univ. Paris Diderot, Sorbonne Paris Cit\'e, Laboratoire Jacques-Louis Lions, UMR 7598, UPMC, CNRS, F-75205 Paris, France, {dupuy@math.univ-paris-diderot.fr}}

\renewcommand\Affilfont{\itshape\small}

\maketitle
\begin{abstract}
\begin{small}
In solid-state physics, energies of molecular systems are usually computed with a plane-wave discretization of Kohn-Sham equations. 
A priori estimates of plane-wave convergence for periodic Kohn-Sham calculations with pseudopotentials have been proved 
%\cite{cances2012numericalPWdiscretization}
, however in most computations in practice, plane-wave cut-offs are not tight enough to target the desired accuracy. 
It is often advocated that the real quantity of interest is not the value of the energy but of energy differences for different configurations. 
The computed energy difference is believed to be much more accurate because of ``discretization error cancellation'', since the sources of numerical errors are essentially the same for different configurations. % \cite{pieniazek2008sources}.
In the present work, we focused on periodic linear Hamiltonians with Coulomb potentials where error cancellation can be explained by the universality of the Kato cusp condition. 
Using weighted Sobolev spaces, Taylor-type expansions of the eigenfunctions are available yielding a precise characterization of this singularity.
This then gives an explicit formula of the first order term of the decay of the Fourier coefficients of the eigenfunctions.  
As a consequence, the error on the difference of discretized eigenvalues for different configurations is indeed reduced by an explicit factor. However, this error converges at the same rate as the error on the eigenvalue. Plane-wave methods for periodic Hamiltonians with Coulomb potentials are thus still inefficient to compute energy differences.
\end{small}
\end{abstract}

\keywords{Eigenvalue problems, Spectral method, Error analysis.}

\AMS{65N15, 65G99, 35P15, 65N35.}

\section*{Introduction}

In solid-state electronic structure computations, a widely used method is density functional theory, often in the form of Kohn-Sham model. In order to discretize this model, it is natural to use plane-wave expansions. 
The Kohn-Sham wave functions are known to have cusps at the positions of the nuclei. These singularities imply poor convergence rate of plane-wave methods. 
The use of pseudopotentials is designed to smooth out the cusps, hence improve plane-wave convergence. In this context, precise convergence estimates have been proved \cite{cances2012numericalPWdiscretization,chen2013numerical}. These results seem to indicate that in general, the plane-wave cut-off is not large enough to reach the desired accuracy on the computed eigenvalues.

However, when computing quantities of interest (energies, forces, response functions...) one often find errors which are much smaller than the ones predicted by the aforementioned works. 
A commonly admitted explanation for this is the fact that the computed quantities are mostly differences of energies between different (rather close) configurations. 
The sources of the numerical errors being essentially the same for different configurations, the final output is more precisely computed than the original eigenvalues \cite{pieniazek2008sources}. 
This was analyzed in a one-dimensional model in \cite{cances2017discretization}, corroborating the above argument.

The aim of the present work is to generalize the results on discretization error cancellation obtained in \cite{cances2017discretization}. The setting here is three-dimensional, with a linear Hamiltonian involving Coulomb interactions. 
We prove that numerical errors do partially cancel when computing the difference of eigenvalues of two close configurations. 
However, this difference of computed eigenvalues converges at the same rate as the computed eigenvalues. Plane-wave methods are thus not accurate enough to compute energy differences.  
%We give an explicit bound for this error and compare it with numerical results.

The article is organized as follows. In Section~\ref{sec:setting}, we present the mathematical setting and our main result (see Theorem~\ref{thm:error_cancellation} below). In Section~\ref{sec:proofs}, we prove Theorem~\ref{thm:error_cancellation} and in Section~\ref{sec:numeric}, we present some numerical results, which are in good agreement with our theoretical results.

%For plane-wave computations, precise convergence estimates are available for periodic Kohn-Sham calculations with pseudopotentials \cite{chen2013numerical,cances2012numericalPWdiscretization}. 
%However, in most computations in practice, plane-wave cut-offs are not tight enough to target the desired accuracy. 
%The real quantity of interest is not the value of the energy but of energy differences for different configurations. 
%The computed energy difference is believed to be much more accurate because of ``discretization error cancellation'', since the sources of numerical errors are essentially the same for different configurations \cite{pieniazek2008sources}.
%
%The goal of this paper is to extend the work \cite{cances2017discretization} initiated on the analysis of the discretization error cancellation in quantum chemistry. 
%The authors showed this phenomenon numerically on simple chemical systems and proved it in a one-dimensional toy model. 
%Here, discretization error cancellation for linear Hamiltonians with periodic Coulomb potentials is proved. The setting and the main result are presented in Section~\ref{sec:setting}. The proof of the main theorem is given in Section~\ref{sec:proofs}. Some numerical experiments on simple Hamiltonians can be found in Section~\ref{sec:numeric}.

\section{Discretization error cancellation for linear Hamiltonians}
\label{sec:setting}

\subsection{Eigenvalue problem and plane-wave discretization}

Let $\Gamma = [-\tfrac{L}{2},\tfrac{L}{2}]^3$, $L>0$ be the unit cell 
repeated over a periodic lattice 
\[
\mathcal{R} = L \Z^3. 
\]
We consider the Hamiltonian $H$ acting on $L^2_\mathrm{per}(\Gamma)$ with domain $H^2_\mathrm{per}(\Gamma)$:
\begin{equation}
\label{eq:H_per}
H = -\frac{1}{2} \Delta + V_\mathrm{per} + W_\mathrm{per} .
\end{equation}
The potential $V_\mathrm{per}$ is an $\mathcal{R}$-periodic potential defined as the unique solution of 
\begin{equation}
\label{eq:Vper}
\begin{cases}
-\Delta V_\mathrm{per} = 4\pi \left( \sumlim{T \in \mathcal{R}}{} \sumlim{I=1}{N_\mathrm{at}} Z_I \left(\delta_{R_I} (\cdot + T) - \frac{1}{|\Gamma|}\right) \right) \\
  V_\mathrm{per} \text{ is } \mathcal{R}\text{-periodic}.
\end{cases}
\end{equation}
The potential $V_\mathrm{per}$ models $N_\mathrm{at} \in \N$ point charges at positions $R_I$ in the unit cell $\Gamma$ and of charge $Z_I >0$. The potential $W_\mathrm{per}$ is a smooth $\mathcal{R}$-periodic function.

The operator $H$ is self-adjoint, bounded below with compact resolvent \cite{reed1978iv}.
Thus it has a discrete spectrum of infinite eigenvalues $E_{1} \leq E_{2} \leq \dots \leq  E_{n} \to \infty$, counted with multiplicities, and the associated eigenfunctions $(\psi_{n})_{n \in \N^*}$ form an orthonormal basis of $L^2_\mathrm{per}(\Gamma)$:
\begin{equation}
\label{eq:eigenvalue_problem}
H \psi_{n} = E_{n} \psi_{n}, \quad \int_{\Gamma} \psi_n \psi_m = \delta_{nm}.
\end{equation}

The eigenvalue problem \eqref{eq:eigenvalue_problem} is solved using a plane-wave basis. Let $\mathcal{R}^*$ be the reciprocal lattice
\[
\mathcal{R}^* = \frac{2\pi}{L} \Z^3.
\]
For ${K} \in \mathcal{R}^*$, let $e_{K} = \frac{e^{i K \cdot x}}{|\Gamma|^{1/2}}$ be the plane-wave with wavevector ${K}$, where $|\Gamma|$ is the volume of the unit cell. 
The family $(e_K)_{K \in \mathcal{R}^*}$ forms an orthonormal basis of $L^2_\mathrm{per}(\Gamma)$ and for all $u \in L^2_\mathrm{per}(\Gamma)$, 
\[
u(x) = \sumlim{K \in \mathcal{R}^*}{} \widehat{u}_K e_K(x), \qquad \widehat{u}_K = \frac{1}{|\Gamma|^{1/2}} \int_{\Gamma} u(x) e^{-iK\cdot x} \, \mathrm{d}x.
\]
Since we only consider real-valued functions, the Sobolev spaces $H^s_\mathrm{per}(\Gamma)$, $s \in \R$, of real-valued $\mathcal{R}$-periodic functions are defined by
\[
H^s_\mathrm{per}(\Gamma) = \left\lbrace u(x) = \sumlim{K \in \mathcal{R}^*}{} \widehat{u}_K e_K(x) \ \Big| \ \sumlim{K \in \mathcal{R}^*}{} (1+|K|^2)^s |\widehat{u}_K|^2 < \infty, \ \widehat{u}_K^* = \widehat{u}_{-K} \right\rbrace,
\]
endowed with the inner product
\[
(u,v)_{H^s} = \sumlim{K \in \mathcal{R}^*}{} (1+|K|^2)^s \widehat{u}^*_K \widehat{v}_K.
\]
The norm associated to the inner product $(\cdot,\cdot)_{H^s}$ is denoted by $\|\cdot \|_{H^s}$.
The discretization space $V_M$, $M \in \N$, is defined by
\begin{equation}
\label{eq:discretization_space}
V_M = \left\lbrace \sumlim{|K| \leq \frac{2\pi}{L}M }{} c_K e_K \ \Big| \ c_{-K}=c_K^* \right\rbrace .
\end{equation}
The constraint $c_{-K}=c_K^*$ ensures that functions of $V_M$ are real-valued. 
\newline

The eigenvalue problem \eqref{eq:eigenvalue_problem} is solved on the Galerkin space $V_M$. The approximate eigenvalues \linebreak $E_1^M \leq E_2^M \leq \dots$ and the corresponding eigenfunctions $(\psi_i^M)$ are such that
\begin{equation}
\label{eq:discrete_eigenvalue_problem}
\forall v \in V_M, \ \int_{\Gamma} \left( -\frac{1}{2} \Delta + V \right) \psi_i^M  v = E_i^M \int_\Gamma \psi_i^M v, \ \int_\Gamma \psi_i^M \psi_j^M = \delta_{ij}.
\end{equation}

The bilinear form associated to the operator $H$ is $H^1$-bounded and coercive. Hence if $M$ is sufficiently large, the eigenpairs $(E_i^M,\psi_i^M)$ of the variational approximation \eqref{eq:discrete_eigenvalue_problem} satisfy \cite{babuska1989finite}
\begin{equation}
0 \leq E_i^M - E_i \leq C \sup\limits_{\substack{\psi \in M(E_i) \\ \|\psi\|_{H^1}=1}} \inf_{v \in V_M} \|\psi - v \|_{H^1}^2,
\end{equation}
and there exists $\psi_i \in M(E_i)$ ($M(E_i)$ is the vector space of eigenfunctions of \eqref{eq:eigenvalue_problem} associated to the eigenvalue $E_i$) such that
\begin{equation}
\label{eq:exact_eigenfunction_correspondance}
\| \psi_i^M - \psi_i \|_{H^1} \leq C\sup\limits_{\substack{\psi \in M(E_i) \\ \|\psi\|_{H^1}=1}} \inf_{v \in V_M} \|\psi - v \|_{H^1}.
\end{equation}

By Sobolev embedding theorem, $H_\mathrm{per}^{3/2+\varepsilon}(\Gamma) \hookrightarrow C^0_\mathrm{per}(\Gamma) $ for all $\varepsilon >0$, hence $\sumlim{{T} \in \mathcal{R}}{} \delta_{{R}_I}(\cdot + {T}) \in H^{-3/2-\varepsilon}_\mathrm{per}(\Gamma)$. Thus $V_\mathrm{per}$ defined by \eqref{eq:Vper} belongs to $H^{1/2-\varepsilon}_\mathrm{per}(\Gamma)$ for all $\varepsilon>0$.
Using elliptic regularity, eigenfunctions $\psi_n$ of the eigenvalue problem \eqref{eq:eigenvalue_problem} belongs to $H_\mathrm{per}^{5/2-\varepsilon}(\Gamma)$ for all $\varepsilon >0$.

Let $\Pi_M$ be the $L^2$-orthogonal projector onto $V_M$. Since $(e_K)_{K \in \mathcal{R}^*}$ is an orthogonal basis of $H^s_\mathrm{per}(\Gamma)$ for all $s \in \R$ (if $s=0$, $H^0_\mathrm{per}(\Gamma)=L^2_\mathrm{per}(\Gamma)$), the best approximation of $\psi \in H^s_\mathrm{per}(\Gamma)$ in $V_M$ is simply $\Pi_M \psi$.
Hence using that for all $r,s \in \R$ with $r \leq s$, we have for $f \in H^s_\mathrm{per}(\Gamma)$
\begin{equation}
\label{eq:best_approximation_Hs}
\|f - \Pi_M f\|_{H^r} \leq \left( \frac{L}{2\pi M} \right)^{s-r} \|f - \Pi_M f\|_{H^s},
\end{equation}
we deduce that for all $\varepsilon >0$,
\begin{equation}
\label{eq:eigenvalue_convergence}
0 < E_i^M - E_i \leq C \| \psi_i - \Pi_M \psi_i \|_{H^1}^2 \leq \frac{C}{M^{3-\varepsilon}} \|\psi\|_{H^{(5-\varepsilon)/2}}^2.
\end{equation}

The goal of this paper is to give an explicit expression of the first order term in \eqref{eq:eigenvalue_convergence} using the particular nature of the singularity of the eigenfunctions $\psi$ in \eqref{eq:eigenvalue_problem}.
Weighted Sobolev space and singular expansion of these eigenfunctions constitute the appropriate way to characterize precisely such singularities.

\subsection{Singular expansion}

The theory of weighted Sobolev spaces has been introduced to study singularities of boundary value problems in conical domains with corners and edges \cite{babuska1972finite,grisvard1992singularities}.  
It is also closely linked to the $b$-calculus of pseudodifferential operators developed by Melrose \cite{melrose93}. 
Although the geometry here is simple, Coulomb singularities generated by the nuclei fit perfectly in this treatment.
The behavior of the electronic wave function close the nucleus has been precisely characterized using this theory \cite{flad2008asymptotic,hunsicker2008analysis}. 
Those results paved the way to the analysis of the muffin-tin and LAPW methods \cite{chen2015numerical} and the VPAW method \cite{dupuy2018vpaw3d}. 
The interested reader may refer to \cite{kozlov1997elliptic, egorov2012pseudo} for a detailed exposition of this theory. 
We briefly expose the definition of the weighted Sobolev space in our setting and some important results used to prove Theorem~\ref{thm:error_cancellation}.\\ 

Let $\mathcal{S}$ be the set of the positions of the nuclei:
\[
\mathcal{S} = \lbrace R_I + T, \ I = 1,\dots,N_\mathrm{at}, \ T \in \mathcal{R} \rbrace.
\]
Let $\varrho$ be a $\mathcal{R}$-periodic continuous function such that $\varrho(R_I + x) = |x|$ for small $x$, $\varrho \in C^\infty_\mathrm{loc}(\mathbb{R}^3 \setminus \mathcal{S})$.

\begin{definition}
Let $k \in \N$ and $\gamma \in \R$.
We define the $k$-th weighted Sobolev space with index $\gamma$ by
\begin{equation}
\mathcal{K}^{k,\gamma}(\Gamma) = \left\lbrace u \in L^2_\mathrm{per}(\Gamma) : \varrho^{|\alpha|-\gamma} \partial^\alpha u \in L^2_\mathrm{per}(\Gamma) \ \forall \ |\alpha | \leq k \right\rbrace.
\end{equation}
\end{definition}

Consider a subspace of functions with the asymptotic expansions
\begin{equation}
\label{eq:weighted_sobolev_asymptotic}
\forall \, I =1,\dots,N_\mathrm{at}, \ u(R_I+ x) \sim \sumlim{j \in \N}{} c_j(\widehat{x}) |x|^j \ \text{as } x \to 0,
\end{equation}
where $c_j$ belongs to the finite dimensional subspace $M_j = \mathrm{span} \lbrace Y_{\ell m}, 0 \leq \ell \leq j, |m| \leq \ell \rbrace$ and for $x\in \R^3$, $\widehat{x} = \tfrac{x}{|x|}$. 

We define the weighted Sobolev spaces with asymptotic type \eqref{eq:weighted_sobolev_asymptotic}:
\begin{equation}
\label{eq:weighted_sobolev_inVPAW3D}
\begin{split}
\mathscr{K}^{k,\gamma}(\Gamma) = \Bigg\lbrace u \in \mathcal{K}^{k,\gamma}(\Gamma) \, \bigg| \, \eta_N \in \mathcal{K}^{k,\gamma+N+1}(\Gamma) \text{ where } \eta_N \text{ is the } \Gamma\text{-periodic function defined in } \Gamma \text{ by} \\
\left. \forall N \in \N, \ \forall x \in \Gamma, \ \eta_N(x) = u(x) - \sumlim{I=1}{N_\mathrm{at}} \omega(|x-R_I|) \sumlim{j=0}{N} c_j^I(\widehat{x-R_I}) |x-R_I|^j \right\rbrace,
\end{split}
\end{equation}
where $\omega$ is a smooth positive cutoff function, \emph{i.e.} $\omega = 1$ near 0 and $\omega = 0$ outside some neighbourhood of $0$. 

The definition \eqref{eq:weighted_sobolev_inVPAW3D} slightly differs from the definition of the weighted Sobolev space given in \cite{chen2015numerical} (Equation (2.6)). However, our definition is consistent with the results that can be found in \cite{hunsicker2008analysis} (see Theorem I.1) and the original paper \cite{flad2008asymptotic} (see Proposition 1) from which the definition appearing in \cite{chen2015numerical} is taken. 

The expansion \eqref{eq:weighted_sobolev_asymptotic} can be viewed as a ``regularity expansion''. Let us suppose that the functions $c_j$ in the singular expansion are constant. Then all the even terms appearing in \eqref{eq:weighted_sobolev_inVPAW3D} are smooth since for any $k \in \N$, $x \mapsto |x|^{2k}$ is smooth. 
For the odd terms in the expansion, the function $x \mapsto |x|$ is continuous but not differentiable at the origin, the function $x \mapsto |x|^3$ is $C^2$ but not $C^3$ and so on. 
Since the decay of the Fourier coefficients depends on the regularity of the function, this expansion enables one to characterize precisely this decay.

The following result, stated in \cite{hunsicker2008analysis,chen2015numerical} (see also \cite{flad2008asymptotic} for similar results in the Hartree-Fock model), gives the regularity of the eigenfunction of \eqref{eq:eigenvalue_problem} in terms of the previously defined weighted Sobolev space.

\begin{theo}
\label{thm:psi_well_behaved_periodic}
Let $\psi$ be an eigenfunction of $H \psi = E\psi$ where $H$ is defined in \eqref{eq:H_per}. Then $\psi$ belongs to $\mathscr{K}^{\infty,\gamma}(\Gamma)$ for all $\gamma < \frac{3}{2}$. The first two terms of the asymptotic expansion \eqref{eq:weighted_sobolev_asymptotic} are explicit and given by
\begin{equation}
c_0^I = \psi(R_I), \quad \quad c_1^I= -Z_I \psi(R_I) + \sumlim{m=-1}{1} \alpha_m Y_{1m}, \ \alpha_m \in \R.
\end{equation}
\end{theo}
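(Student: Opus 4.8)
The plan is to prove both assertions locally around each nucleus $R_I$, since away from $\mathcal{S}$ the coefficients of $H$ are smooth and interior elliptic regularity already gives $\psi \in C^\infty_\mathrm{loc}(\R^3\setminus\mathcal{S})$, so all the singular behaviour is concentrated at the points of $\mathcal{S}$. The first step is to isolate the Coulomb singularity of $V_\mathrm{per}$: because $x\mapsto\frac{1}{4\pi|x|}$ is the fundamental solution of $-\Delta$ on $\R^3$, the function $V_\mathrm{per}+\frac{Z_I}{|\cdot-R_I|}$ satisfies, in a ball $B$ around $R_I$, a Poisson equation whose right-hand side — the remaining point charges plus the uniform background — is smooth in $B$, hence is $C^\infty(B)$ by interior elliptic regularity. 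Writing $x$ for the coordinate centred at $R_I$, this recasts $H\psi=E\psi$ near $R_I$ as $-\tfrac12\Delta\psi-\frac{Z_I}{|x|}\psi+b_I(x)\psi=E\psi$ with $b_I\in C^\infty(B)$, i.e. as a hydrogen-type operator; note also that $\psi\in H^{5/2-\varepsilon}_\mathrm{per}(\Gamma)\hookrightarrow C^0_\mathrm{per}(\Gamma)$, so $\psi$ is bounded and continuous at $R_I$.

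Next I would invoke the weighted-Sobolev / $b$-calculus regularity theory for such operators (\cite{flad2008asymptotic,hunsicker2008analysis,chen2015numerical}, with \cite{kozlov1997elliptic,melrose93} for the general framework) to obtain $\psi\in\mathscr{K}^{\infty,\gamma}(\Gamma)$ for every $\gamma<\tfrac32$. In polar coordinates around $R_I$ the Laplacian is a Fuchs-type (Mellin-elliptic) operator in the weighted scale $\mathcal{K}^{k,\gamma}(\Gamma)$, the Coulomb term $Z_I\varrho^{-1}$ is a perturbation lowering the weight by one unit, and a parametrix / iteration argument on the weighted scale yields the polyhomogeneous expansion \eqref{eq:weighted_sobolev_asymptotic} together with the remainder estimates encoded in \eqref{eq:weighted_sobolev_inVPAW3D}. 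The admissible exponents are the non-negative integers, and the constraint $c_j\in M_j$ reflects the indicial roots of $-\Delta$ at the conical point: $Y_{\ell m}$ carries the roots $\ell$ and $-\ell-1$, only $\ell\ge0$ being compatible with $\psi\in L^2$ and bounded, and the Coulomb term, which shifts powers upwards by one when solving, couples $Y_{\ell m}$ only into the terms $|x|^{j}$ with $j\ge\ell$. The threshold $\gamma<\tfrac32$ is dictated by the leading term: $\varrho^{-\gamma}\psi\in L^2$ near $R_I$ forces $\int_0 r^{2-2\gamma}\,\mathrm{d}r<\infty$, which is sharp whenever $\psi(R_I)\neq0$.

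Finally I would read off $c_0^I$ and $c_1^I$ by substituting the now rigorously term-by-term differentiable expansion into the local equation. Since $M_0$ consists of constants and $\psi$ is continuous at $R_I$, evaluating \eqref{eq:weighted_sobolev_asymptotic} at $x=0$ gives $c_0^I=\psi(R_I)$. Writing the $M_1$-valued coefficient as $c_1^I=\beta_I+\sum_{m=-1}^{1}\alpha_m Y_{1m}$ with $\beta_I,\alpha_m\in\R$ (the constant $\beta_I$ being the radial, $\ell=0$, component) and using $\Delta\big(|x|^{j}Y_{\ell m}(\widehat{x})\big)=\big(j(j+1)-\ell(\ell+1)\big)|x|^{j-2}Y_{\ell m}(\widehat{x})$, one checks that the only contribution to $\Delta\psi$ singular like $|x|^{-1}$ is $2\beta_I|x|^{-1}$, coming from the radial part of $c_1^I|x|$ — the constant $c_0^I$ is harmonic, the $\ell=1$ part of $c_1^I|x|$ is annihilated by $\Delta$, and every term $c_j^I(\widehat{x})|x|^{j}$ with $j\ge2$ as well as the polyhomogeneous remainder stay bounded near $R_I$. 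Since $\frac{Z_I}{|x|}\psi=\frac{Z_I\psi(R_I)}{|x|}+O(1)$ (because $\psi(x)-\psi(R_I)=O(|x|)$) and $(b_I+W_\mathrm{per}-E)\psi$ is bounded, matching the $|x|^{-1}$-singular parts in $-\tfrac12\Delta\psi-\frac{Z_I}{|x|}\psi+(b_I+W_\mathrm{per})\psi=E\psi$ yields $-\beta_I=Z_I\psi(R_I)$, hence $c_1^I=-Z_I\psi(R_I)+\sum_{m=-1}^{1}\alpha_m Y_{1m}$, which is the Kato cusp condition; the coefficients $\alpha_m$ are resonant and not fixed by this local leading-order analysis.

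I expect the main obstacle to be the second step: the full polyhomogeneous weighted-regularity statement is the genuine analytic content, and I would not reprove it but import it from \cite{flad2008asymptotic,hunsicker2008analysis,chen2015numerical}. Once that is available, the reduction to a single Coulomb singularity (first step) and the matching of the leading singular terms (third step) are elementary, the only mild care being the pointwise control of the polyhomogeneous remainder near $R_I$, which follows from a weighted Sobolev embedding.
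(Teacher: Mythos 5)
Your proposal is sound, but note first that the paper itself offers no proof of Theorem~\ref{thm:psi_well_behaved_periodic}: the statement is imported from \cite{hunsicker2008analysis,chen2015numerical} (with \cite{flad2008asymptotic} for the Hartree--Fock analogue), so there is no internal argument to compare against. Your sketch is consistent with how those references proceed and adds value by actually deriving the explicit coefficients: the reduction to a single Coulomb singularity plus a smooth remainder via the fundamental solution of $-\Delta$, the appeal to Fuchs-type/Mellin regularity on the weighted scale for the membership $\psi\in\mathscr{K}^{\infty,\gamma}(\Gamma)$ for $\gamma<\tfrac{3}{2}$, and the matching of the $|x|^{-1}$-singular parts using $\Delta(\beta|x|)=2\beta|x|^{-1}$ together with the harmonicity of $|x|\,Y_{1m}(\widehat{x})$ are all correct, and the identification $\beta_I=-Z_I\psi(R_I)$ is exactly the Kato cusp condition. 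Two caveats. First, the genuine analytic content --- the polyhomogeneous expansion with remainder $\eta_N\in\mathcal{K}^{k,\gamma+N+1}$ --- is still cited rather than proved in your second step; that matches the paper's own treatment, but it should be stated explicitly as the imported ingredient rather than presented as an outcome of your argument. Second, your claim that the remainder contributes a \emph{bounded} term to $\Delta\psi$ near $R_I$ is slightly too strong: the weighted Sobolev embedding only yields $|\Delta\eta_1|\lesssim\varrho^{-\varepsilon}$, which is nevertheless $o(\varrho^{-1})$ and therefore still suffices for the matching. Finally, observe that \eqref{eq:Vper} as literally written produces $V_\mathrm{per}\sim +Z_I/|x-R_I|$ near $R_I$ (repulsive), whereas your local equation and the stated coefficient $c_1^I=-Z_I\psi(R_I)$ correspond to the attractive sign; this is a sign inconsistency in the paper's conventions rather than a flaw in your argument, but it deserves a remark.
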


In \cite{flad2008asymptotic,chen2015numerical}, functions belonging to $\mathscr{K}^{\infty,\gamma}(\Gamma)$ are called ``well-behaved''.
It is easy to see that if $u$ is asymptotically well-behaved then by the definition of the weighted Sobolev space with asymptotic type \eqref{eq:weighted_sobolev_asymptotic}, the remainder
$\eta_N$ is in the classical Sobolev space $H^{5/2 + N -\varepsilon}_\mathrm{per}(\Gamma)$. 

The last assertion is the well-known Kato cusp condition \cite{kato1957eigenfunctions}.

\subsection{Main result}

Using the previous characterization of the eigenfunctions of the periodic Hamiltonian \eqref{eq:H_per}, an explicit expression of the error cancellation factor can be obtained. The proof of the next theorem can be found in Section~\ref{sec:proofs}. 

\begin{theo}
\label{thm:error_cancellation}
Let $(\psi_i,E_i)$ be an eigenpair of the operator $H$ defined in \eqref{eq:H_per}. Let $E_i^M$ be the corresponding eigenvalue obtained by the plane-wave variational approximation on $V_M$ given by \eqref{eq:discretization_space}.
Let $M_0 >0$ sufficiently large such that \eqref{eq:exact_eigenfunction_correspondance} holds for all $M \geq M_0$. Then for all $\varepsilon>0$, there exists a positive constant $C_{\varepsilon,M_0}$ such that for all $M \geq M_0$, we have
\begin{equation}
\label{eq:final_error_estimate}
\left| E_i^M - E_i - \frac{2L^{3}}{3\pi^{3}M^3} \sumlim{I=1}{N_\mathrm{at}} Z_I^2 \psi_i(R_I)^2 \right| \leq  \frac{C_{\varepsilon,M_0}}{M^{4-\varepsilon}}.
\end{equation}
\end{theo}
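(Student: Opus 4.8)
The plan is to obtain an explicit asymptotic expression for the Galerkin error $E_i^M - E_i$ by isolating the leading-order term in the tail of the Fourier series of the eigenfunction $\psi_i$. The starting point is the standard variational bound \eqref{eq:eigenvalue_convergence}, but one needs a two-sided estimate with an explicit constant rather than the one-sided $O(M^{-3+\varepsilon})$ bound. For this I would use the refined Galerkin eigenvalue estimate which gives, up to lower-order terms, $E_i^M - E_i = \|\psi_i - \Pi_M \psi_i\|_{a}^2 + (\text{higher order})$ where $\|\cdot\|_a$ is the energy norm associated to $H - E_i$; since $a(u,u) = \tfrac12\|\nabla u\|_{L^2}^2 + \int(V_\mathrm{per}+W_\mathrm{per}-E_i)|u|^2$ and the potential contributes only a lower-order correction to the tail energy, the dominant term is $\tfrac12 \|\nabla(\psi_i - \Pi_M\psi_i)\|_{L^2}^2 = \tfrac12 \sum_{|K| > \frac{2\pi}{L}M} |K|^2 |\widehat{(\psi_i)}_K|^2$. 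So the crux reduces to: \emph{compute the asymptotics of $\sum_{|K| > \frac{2\pi}{L}M} |K|^2 |\widehat{(\psi_i)}_K|^2$ as $M \to \infty$.}

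Next I would determine the leading decay of $\widehat{(\psi_i)}_K$ for large $|K|$ using Theorem~\ref{thm:psi_well_behaved_periodic}. Writing $\psi_i = \sum_{I} \omega(|x - R_I|)\big(c_0^I + c_1^I(\widehat{x-R_I})|x-R_I|\big) + \eta_1$ with $\eta_1 \in H^{5/2+1-\varepsilon}_\mathrm{per}(\Gamma)$, only the odd term $c_1^I |x - R_I|$ contributes to the Fourier tail at the relevant order: the constant and the $Y_{1m}$-part of $c_1^I$ give, respectively, a smooth and a $C^1$-but-with-$|x|$-weight piece whose Fourier coefficients decay faster or contribute at the same rate but with zero angular average of the square (the $Y_{1m}$ cross terms integrate to zero over the sphere, and the pure $|x|^1 Y_{1m}$ term decays like $|K|^{-4}$, hence $|K|^2|\widehat{\cdot}_K|^2 \sim |K|^{-6}$, summable and lower order). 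The model singularity is thus $\sum_I Z_I^2 \psi_i(R_I)^2$ times the function $x \mapsto \omega(|x-R_I|)|x-R_I|$. Using the known Fourier transform of $\omega(|x|)|x|$ — whose leading large-$|K|$ behavior is governed by the non-smoothness of $r \mapsto r$ at $0$ and equals $-\frac{4\pi}{|\Gamma|^{1/2}}|K|^{-4} + o(|K|^{-4})$ (with the cutoff $\omega$ affecting only faster-decaying terms) — I get $\widehat{(\psi_i)}_K \sim -\frac{4\pi}{|\Gamma|^{1/2}}\Big(\sum_I Z_I\psi_i(R_I) e^{-iK\cdot R_I}\Big)|K|^{-4}$. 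Squaring and summing, the oscillatory cross terms $e^{iK\cdot(R_I - R_J)}$ for $I \neq J$ vanish upon the angular sum (shell averages of characters tend to zero), leaving $|\widehat{(\psi_i)}_K|^2 \approx \frac{16\pi^2}{|\Gamma|}\big(\sum_I Z_I^2\psi_i(R_I)^2\big)|K|^{-8}$ after summation over shells.

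Then I would carry out the tail sum: $\tfrac12\sum_{|K| > \frac{2\pi}{L}M}|K|^2|\widehat{(\psi_i)}_K|^2 \approx \frac{8\pi^2}{|\Gamma|}\big(\sum_I Z_I^2\psi_i(R_I)^2\big)\sum_{|K| > \frac{2\pi}{L}M}|K|^{-6}$. Approximating the lattice sum over $\mathcal{R}^* = \frac{2\pi}{L}\Z^3$ by an integral, $\sum_{|K| > \frac{2\pi}{L}M}|K|^{-6} \approx \big(\frac{L}{2\pi}\big)^3 \int_{|\xi| > \frac{2\pi}{L}M}|\xi|^{-6}\,d\xi = \big(\frac{L}{2\pi}\big)^3 \cdot 4\pi \cdot \frac{1}{3}\big(\frac{2\pi}{L}M\big)^{-3} = \frac{L^6}{6\pi^2 M^3}$; plugging in $|\Gamma| = L^3$ yields $\frac{8\pi^2}{L^3}\cdot\frac{L^6}{6\pi^2 M^3}\sum_I Z_I^2\psi_i(R_I)^2 = \frac{4L^3}{3 M^3}\sum_I Z_I^2\psi_i(R_I)^2$, which after tracking the $\tfrac12$ from the kinetic energy and the constant from the Fourier transform of $\omega(|x|)|x|$ (I expect a factor discrepancy of $2\pi^{-3}$ versus $\pi^{-1}$ to resolve in the careful bookkeeping) gives the claimed coefficient $\frac{2L^3}{3\pi^3 M^3}$. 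Finally I would bound all the error terms — the potential contribution to the energy norm, the $\eta_1$ and $Y_{1m}$ contributions to the Fourier tail, the Euler–Maclaurin/integral-approximation error for the lattice sum, the cross-term cancellation remainder, and the higher-order Galerkin terms from \eqref{eq:exact_eigenfunction_correspondance} — all by $O(M^{-4+\varepsilon})$, using \eqref{eq:best_approximation_Hs} and the $H^{5/2+1-\varepsilon}$-regularity of $\eta_1$.

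\textbf{Main obstacle.} The delicate point is the precise asymptotics of the Fourier coefficients of the model singular term $\omega(|x|)|x|$ — both getting the exact constant (this is where the factor $\frac{2L^3}{3\pi^3}$ comes from, and an error here propagates directly) and showing that the cutoff $\omega$ only perturbs at order $o(|K|^{-4})$ — together with proving that the $I \neq J$ cross terms and the $Y_{1m}$-angular terms genuinely contribute only at $O(M^{-4+\varepsilon})$ after the shell summation rather than at the leading order. Controlling the lattice-sum-to-integral error at the $O(M^{-4})$ level (rather than the naive $O(M^{-4})$ one might fear is actually $O(M^{-3})$) also requires some care, exploiting the smoothness of $|\xi|^{-6}$ away from the origin.
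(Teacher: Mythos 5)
Your route is genuinely different from the paper's: the paper tests the discrete and continuous eigenvalue equations against each other to get the exact identity $E_i^M-E_i=-\bigl(\int_\Gamma V\,\Pi_M^\perp\psi_i\,\psi_i^M\bigr)\bigl(1+\cO(M^{-5/2+\varepsilon})\bigr)$, which leads to a low--high frequency \emph{double} sum coupled through the Coulomb kernel $|K-K'|^{-2}$; the hard work there is factorizing that double sum near the sphere $|K|\approx|K'|\approx \tfrac{2\pi}{L}M$ and killing the $I\neq J$ cross terms by Abel summation. Your reduction to the single kinetic tail $\tfrac12\sum_{|K|>\frac{2\pi}{L}M}|K|^2|\widehat{\psi}_K|^2$ would, if justified, bypass the double-sum factorization entirely, and your leading-order arithmetic is consistent with the paper's (with the correct constant $+\tfrac{8\pi}{|\Gamma|^{1/2}|K|^4}\sum_I Z_I\psi(R_I)e^{-iK\cdot R_I}$ for the Fourier tail — your $-4\pi$ is off by $-2$ — one indeed recovers $\tfrac{2L^3}{3\pi^3M^3}$). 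However, the reduction itself is where the gap lies. The exact identity is $E_i^M-E_i=a(\psi_i-\psi_i^M,\psi_i-\psi_i^M)-E_i\|\psi_i-\psi_i^M\|_{L^2}^2$ with $\psi_i^M$, not $\Pi_M\psi_i$. Writing $\psi_i-\psi_i^M=\Pi_M^\perp\psi_i+(\Pi_M\psi_i-\psi_i^M)$ produces the term $a(\Pi_M\psi_i-\psi_i^M,\Pi_M\psi_i-\psi_i^M)\gtrsim\|\nabla(\Pi_M\psi_i-\psi_i^M)\|_{L^2}^2$, and the only bound available from \eqref{eq:exact_eigenfunction_correspondance} is $\|\Pi_M\psi_i-\psi_i^M\|_{H^1}\lesssim M^{-3/2+\varepsilon}$, i.e.\ this term is a priori $\cO(M^{-3+\varepsilon})$ — the \emph{same} order as the quantity you are extracting. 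To close this you need the improved estimate $\|\Pi_M\psi_i-\psi_i^M\|_{H^1}\lesssim M^{-5/2+\varepsilon}$, which follows from comparing $\Pi_M\psi_i$ with the Ritz projection and using that $a(\Pi_M^\perp\psi_i,v)=\int_\Gamma V\Pi_M^\perp\psi_i\,v$ for $v\in V_M$ (kinetic orthogonality) together with $\|V\Pi_M^\perp\psi_i\|_{L^2}\lesssim M^{-5/2+\varepsilon}$. This is exactly the mechanism of the paper's Lemmas~\ref{lem:E_M-E}--\ref{lem:remainder_explicited}; it is not "standard" and must be proved. Similarly, "the potential contributes only a lower-order correction" to the tail energy needs an $L^p$/Sobolev product argument ($\int V|\Pi_M^\perp\psi_i|^2\lesssim\|\Pi_M^\perp\psi_i\|_{H^{1/2+\varepsilon}}^2\lesssim M^{-4+\varepsilon}$); the naive duality pairing only gives $M^{-3}$.

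The second gap is in the tail computation itself. Your dismissal of the $Y_{1m}$ part of $c_1^I$ is based on a claimed decay $|K|^{-4}$ leading to "$|K|^2|\widehat{\cdot}_K|^2\sim|K|^{-6}$, summable and lower order" — but $\sum_{|K|>\frac{2\pi}{L}M}|K|^{-6}\sim M^{-3}$ is precisely the leading order, so this reasoning would not exclude a contribution to the constant. The correct reason these terms drop out is that $|x-R_I|\,Y_{1m}(\widehat{x-R_I})$ is a homogeneous degree-one \emph{polynomial}, hence $\omega(|x-R_I|)|x-R_I|Y_{1m}(\widehat{x-R_I})$ is smooth and its Fourier coefficients decay faster than any power (this is how Lemma~\ref{lem:psi_expansion_in_M} handles it). Likewise, the $I\neq J$ cross terms $\sum_{|K|>\frac{2\pi}{L}M}|K|^{-6}e^{iK\cdot(R_I-R_J)}$ sit at the critical order and require a genuine argument (the paper's Lemma~\ref{lem:abel}, an Abel summation in one lattice direction) to gain the extra factor $M^{-1}$; "shell averages of characters tend to zero" is not quantitative enough. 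With these two points repaired — the Ritz-projection estimate for the reduction, and the polynomial/Abel arguments for the subleading terms — your approach does give an alternative, and arguably shorter, proof, since it avoids the delicate double-sum splitting of Lemma~\ref{lem:double_sum_split}.
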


For two different atomic configurations, $\mathbf{R}^{(1)} = (R_1^{(1)},\dots,R_{N_\mathrm{at}}^{(1)})$ and $\mathbf{R}^{(2)} = (R_1^{(2)},\dots,R_{N_\mathrm{at}}^{(2)})$, the error on the discretized eigenvalue difference is
\[
E_i^{\mathbf{R}_1,M} - E_i^{\mathbf{R}_2,M} = E_i^{\mathbf{R}_1} - E_i^{\mathbf{R}_2} + \frac{2L^{3}}{3\pi^{3}M^3} \sumlim{I=1}{N_\mathrm{at}} Z_I^2 \left(\psi^{(1)}\left(R_I^{(1)} \right)^2- \psi^{(2)}\left(R_I^{(2)}\right)^2\right)+ \cO \left( \frac{1}{M^{4-\varepsilon}} \right),
\]
where $\psi^{(1)}$ and $\psi^{(2)}$ are $L^2$-normalized eigenfunctions associated respectively to $E_i^{\mathbf{R}_1}$ and $E_i^{\mathbf{R}_2}$.
Since there is a differentiable dependence on the atomic positions, the error cancellation is of order  $\frac{|\mathbf{R}^{(1)}-\mathbf{R}^{(2)}|}{M^3}$. The convergence rate of the eigenvalue difference is the same as the eigenvalue error, however the prefactor is reduced (see Figure~\ref{fig:error_cancellation} for an example on a simple model). 
\newline

In \cite{cances2017discretization}, the authors analyzed the phenomenon of error cancellation in the case of the lowest eigenvalue of the periodic one-dimensional Schr\"odinger operator
\[
H = -\frac{\mathrm{d}^2}{\mathrm{d}x^2} - Z_0 \sumlim{k \in \Z}{} \delta_k - Z_R \sumlim{k \in \Z}{} \delta_{k+R}, \quad 0 <R<1, \quad Z_0,Z_R >0.
\]
The authors showed that the convergence of the lowest eigenvalue $E_M$ computed with plane-waves with wavenumber $|k|\leq M$ is given by
\[
E_M = E + \frac{Z_0^2 \psi(0)^2+Z_R^2 \psi(R)^2}{2\pi^2 M} + \cO \left( \frac{1}{M^{2-\varepsilon}} \right) ,
\]
for all $\varepsilon >0$ and $M$ sufficiently large. The function $\psi$ is an $L^2$-normalized eigenfunction associated to the eigenvalue $E$. 
It is interesting to notice the similarity with the expression \eqref{eq:final_error_estimate} obtained here. 
This stems from the fact that in both models, the singularities of the eigenfunctions are comparable: eigenfunctions are Lipschitz at the positions of the nuclei but generally not differentiable.
\newline

Equation~\eqref{eq:final_error_estimate} gives a first order correction formula to the computed eigenvalue $E_i^M$. 
Since $\psi_i \in H^{5/2-\varepsilon}_\mathrm{per}(\Gamma)$ and $H^{3/2+\varepsilon}_\mathrm{per}(\Gamma) \hookrightarrow C^0_\mathrm{per}(\Gamma)$ for all $\varepsilon >0$, by Equations \eqref{eq:exact_eigenfunction_correspondance} and \eqref{eq:best_approximation_Hs}, for all $\varepsilon>0$, there is a constant $C_\varepsilon$ such that $\| \psi_i^M - \psi_i \|_{L^\infty} \leq C \| \psi_i^M - \psi_i \|_{H^{3/2+\varepsilon/2}} \leq  \frac{C_\varepsilon}{M^{1-\varepsilon}}$. 
Thus the error estimate~\eqref{eq:final_error_estimate} can be written,
\begin{equation}
\label{eq:first_order_correction}
E_i^M - E_i = \frac{2L^{3}}{3\pi^{3}M^3} \sumlim{I=1}{N_\mathrm{at}} Z_I^2 \psi_i^M(R_I)^2 + \cO \left( \frac{1}{M^{4-\varepsilon}} \right).
\end{equation}
Computing the first order correction only requires an inverse FFT to get the value of the wave function at the positions of the nuclei. 
This correction improves the convergence rate by a factor $\frac{1}{M}$. Other approaches dealing directly (see \cite{chen2015numerical,dupuy2018vpaw3d}) or indirectly (\emph{e.g.} using pseudopotentials \cite{cances2012numericalPWdiscretization,cances2016perturbation}) with the Coulomb singularities exist and are more efficient. 
\newline

The numerical study in \cite{cances2017discretization} suggests that error cancellation also happens in periodic Kohn-Sham computations with pseudopotentials. 
In these models, under some assumptions on the regularity of the exchange-correlation potential and the positivity of the whole electronic density (frozen-core and valence electron densities), the eigenfunctions are smooth in the whole domain except at some spheres centered at the positions of the nuclei. 
These spheres correspond to the regions where the pseudopotentials are not smooth, because of a mismatch of higher derivatives with the true electronic potential. With a precise characterization of the singularity induced by the pseudopotentials, a similar analysis of the discretization error cancellation in plane-wave calculations should be possible.

\section{Proof of Theorem~\ref{thm:error_cancellation}}
\label{sec:proofs}

In this section, $M$ is an integer large enough so that \eqref{eq:exact_eigenfunction_correspondance} holds. 
$\Pi_M$ denotes the $L^2$-orthogonal projector onto $V_M$ defined in \eqref{eq:discretization_space}. 
We denote by $\Pi_M^\perp = \mathrm{Id} - \Pi_M$. 

\subsection{Estimates on the Fourier coefficients}

\begin{lem}
\label{lem:basic_estimate_psi_M}
Let $\psi$ be an $L^2$-normalized eigenfunction of \eqref{eq:eigenvalue_problem}. 
\begin{enumerate}
\item Then for $s<\tfrac{5}{2}$ and all $\varepsilon >0$, we have a positive constant $C_{s,\varepsilon}$ independent of $M$ such that 
\begin{equation}
\| \Pi_M^\perp \psi \|_{H^s} \leq \frac{C_{s,\varepsilon}}{M^{5/2-s-\varepsilon}}.
\end{equation}
\item Let $\psi_i^M$ be an eigenfunction of the variational approximation \eqref{eq:discrete_eigenvalue_problem}. Let $\psi_i$ be the corresponding eigenfunction such that \eqref{eq:exact_eigenfunction_correspondance} holds. Then for $s<\tfrac{5}{2}$ and all $\varepsilon >0$, we have a positive constant $C_{s,\varepsilon}$ independent of $M$ such that 
\begin{equation}
\| \psi_i^M - \psi_i \|_{H^s} \leq \frac{C_{s,\varepsilon}}{M^{5/2-s-\varepsilon}}.
\end{equation}
\end{enumerate}
\end{lem}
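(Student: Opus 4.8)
The plan is to prove part 1 by combining the regularity statement $\psi \in H^{5/2-\varepsilon}_\mathrm{per}(\Gamma)$ (established in the excerpt via the elliptic-regularity argument, or equivalently from Theorem~\ref{thm:psi_well_behaved_periodic}: the Kato cusp expansion shows $\psi$ is well-behaved, so after subtracting the $c_0^I$ and $c_1^I$ terms the remainder lies in $H^{5/2+1-\varepsilon}_\mathrm{per}$, and the first singular correction $|x-R_I|$ itself is exactly $H^{5/2-\varepsilon}_\mathrm{per}$) with the elementary interpolation inequality \eqref{eq:best_approximation_Hs}. Concretely, for any $s < \tfrac{5}{2}$, pick $r = s$ and $t = \tfrac{5-\varepsilon}{2} > s$ in \eqref{eq:best_approximation_Hs} to get
\[
\|\psi - \Pi_M \psi\|_{H^s} \leq \left(\frac{L}{2\pi M}\right)^{\frac{5-\varepsilon}{2}-s} \|\psi - \Pi_M \psi\|_{H^{(5-\varepsilon)/2}} \leq \frac{C_{s,\varepsilon}}{M^{5/2 - s - \varepsilon/2}} \|\psi\|_{H^{(5-\varepsilon)/2}},
\]
using in the last step that $\Pi_M$ is an $H^{(5-\varepsilon)/2}$-orthogonal projector (so it is a contraction there) and that $\|\psi\|_{H^{(5-\varepsilon)/2}}$ is a finite $M$-independent constant. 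Since $\Pi_M^\perp\psi = \psi - \Pi_M\psi$, and relabeling $\varepsilon/2 \rightsquigarrow \varepsilon$, this is exactly the claimed bound. This is entirely routine; the only subtlety worth a sentence is that $\psi$ need not be a normalized eigenfunction of a simple eigenvalue — $M(E_i)$ may be multi-dimensional — but the regularity and the bound hold uniformly on the (finite-dimensional) unit sphere of $M(E_i)$, so nothing changes.

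For part 2, I would use the triangle inequality $\|\psi_i^M - \psi_i\|_{H^s} \le \|\psi_i^M - \Pi_M\psi_i\|_{H^s} + \|\Pi_M\psi_i - \psi_i\|_{H^s}$. The second term is handled by part 1. For the first term, note $\psi_i^M - \Pi_M\psi_i \in V_M$, so I can use the inverse (Bernstein-type) inequality on the finite-dimensional space $V_M$: for $w \in V_M$ and $s \ge 1$,
\[
\|w\|_{H^s} \leq \left(\frac{2\pi M}{L}\right)^{s-1} \|w\|_{H^1},
\]
which follows immediately since every Fourier mode of $w$ satisfies $|K| \le \tfrac{2\pi}{L}M$, hence $(1+|K|^2)^{s} \le (1+ (2\pi M/L)^2)^{s-1}(1+|K|^2)$ up to harmless constants. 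Then $\|\psi_i^M - \Pi_M\psi_i\|_{H^1} \le \|\psi_i^M - \psi_i\|_{H^1} + \|\psi_i - \Pi_M\psi_i\|_{H^1}$; the first is $\mathcal{O}(M^{-(3/2-\varepsilon)})$ by \eqref{eq:exact_eigenfunction_correspondance} together with \eqref{eq:eigenvalue_convergence}-type best-approximation estimates (i.e.\ by part 1 with $s=1$ applied to the exact eigenfunction), and the second is again part 1 with $s=1$. Multiplying the $\mathcal{O}(M^{-(3/2-\varepsilon)})$ bound by the $M^{s-1}$ from the inverse inequality gives $\mathcal{O}(M^{-(5/2 - s - \varepsilon)})$, matching the claim. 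For $s < 1$ the statement is weaker than the $s=1$ case (since $H^s$-norms are dominated by $H^1$-norms up to constants), so it follows a fortiori.

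The main obstacle — such as it is — is bookkeeping of the $\varepsilon$'s: one must be careful that the loss of $\varepsilon$ in the regularity of $\psi$ ($\psi \in H^{5/2-\varepsilon}$, not $H^{5/2}$) and the loss of $\varepsilon$ in \eqref{eq:exact_eigenfunction_correspondance}/\eqref{eq:eigenvalue_convergence} do not compound into something worse than a single $\varepsilon$; they do not, since at each step one can start from an $\varepsilon' < \varepsilon$ and absorb constants. A secondary point to verify is that the inverse inequality is applied to a function genuinely in $V_M$ (it is, because both $\psi_i^M$ and $\Pi_M\psi_i$ lie in $V_M$), and that the constant $C$ in \eqref{eq:exact_eigenfunction_correspondance} is uniform for $M \ge M_0$, which is precisely the hypothesis under which $M_0$ is chosen. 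Everything else is a direct application of Plancherel's identity mode-by-mode on the reciprocal lattice $\mathcal{R}^*$.
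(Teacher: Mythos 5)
Your part 1 is correct and is exactly the paper's (one-line) argument: $\psi\in H^{5/2-\varepsilon}_\mathrm{per}(\Gamma)$ plus \eqref{eq:best_approximation_Hs} with the upper index $(5-\varepsilon)/2$, using that $\Pi_M$ is an orthogonal projector in every $H^t_\mathrm{per}(\Gamma)$. Your treatment of part 2 for $1\le s<\tfrac52$ (triangle inequality through $\Pi_M\psi_i$, inverse inequality on $V_M$, and \eqref{eq:exact_eigenfunction_correspondance} for the $H^1$ bound) is also fine and is more detail than the paper supplies.

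There is, however, a genuine error in your dismissal of the range $s<1$. You argue that since $\|\cdot\|_{H^s}\le C\|\cdot\|_{H^1}$ for $s<1$, the claim ``follows a fortiori'' from the $s=1$ case. The direction is backwards: for $s<1$ the \emph{required} rate $M^{-(5/2-s-\varepsilon)}$ is \emph{faster} than the $H^1$ rate $M^{-(3/2-\varepsilon)}$, so domination by the $H^1$ norm only yields $\|\psi_i^M-\psi_i\|_{H^s}\le CM^{-(3/2-\varepsilon)}$, which is not the stated bound. In fact $s<1$ is the hardest case of part 2 and the one the paper actually uses downstream: Lemma~\ref{lem:E_M-E} needs $\int_\Gamma \psi_i^M(\psi_i-\psi_i^M)=\cO(M^{-5/2+\varepsilon})$ and Lemma~\ref{lem:remainder_explicited} needs $\|\psi_i^M-\Pi_M\psi_i\|_{L^2}\le CM^{-5/2+\varepsilon}$, i.e.\ precisely the $s=0$ estimate at the full rate. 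To close the gap you need the improved $L^2$ eigenfunction estimate of Aubin--Nitsche type (as in Babu\v{s}ka--Osborn \cite{babuska1989finite}): the $H^2$ regularity lifting of $H$ gives $\|\psi_i^M-\psi_i\|_{L^2}\le C\,M^{-1}\|\psi_i^M-\psi_i\|_{H^1}\le C M^{-5/2+\varepsilon}$, and the intermediate exponents $0<s<1$ then follow by interpolating between the $s=0$ and $s=1$ endpoints. With that addition your argument covers the whole range $s<\tfrac52$; without it, the lemma as you prove it is insufficient for the later lemmas of the paper.
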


\begin{proof}
This lemma is proved by noticing that $\psi \in H^{5/2-\varepsilon}_\mathrm{per}(\Gamma)$ and using \eqref{eq:best_approximation_Hs}.
\end{proof}

%We denote by $(\widehat{\psi}_K)_{|K| \leq \frac{2\pi}{L}M}$ the Fourier coefficients of t

\begin{lem}
\label{lem:psi_expansion_in_M}
Let $K \in \mathcal{R}^*$ and let $\widehat{\eta}_K$ be the Fourier coefficient for the wavenumber $K$ of the remainder of the singular expansion \eqref{eq:weighted_sobolev_inVPAW3D} for $N=1$. Then, for any $n \in \N$, we have
\begin{equation}
\label{eq:fourier_coefficient_asymptotic}
\widehat{\psi}_K = \frac{1}{|\Gamma|^{1/2}} \int_\Gamma \psi(x) e^{-iK\cdot x}\, \mathrm{d}x = \frac{8{\pi}}{|\Gamma|^{1/2} |K|^4} \sumlim{I=1}{N_\mathrm{at}} Z_I \psi(R_I) e^{-i K\cdot R_I} + \widehat{\eta}_K + o \left( \frac{1}{K^n} \right).
\end{equation}
\end{lem}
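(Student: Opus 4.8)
The plan is to compute the Fourier coefficient $\widehat{\psi}_K$ by splitting $\psi$ according to the singular expansion of Theorem~\ref{thm:psi_well_behaved_periodic} with $N=1$: write
\[
\psi(x) = \sum_{I=1}^{N_\mathrm{at}} \omega(|x-R_I|)\bigl( c_0^I + c_1^I(\widehat{x-R_I})\,|x-R_I| \bigr) + \eta_1(x),
\]
where $c_0^I = \psi(R_I)$ and $c_1^I(\widehat y) = -Z_I\psi(R_I) + \sum_{m=-1}^{1}\alpha_m Y_{1m}(\widehat y)$. The Fourier coefficient of $\eta_1$ is exactly $\widehat{\eta}_K$, so the task reduces to computing the Fourier coefficients of the localized model terms $\omega(|y|)(c_0^I + c_1^I(\widehat y)|y|)$ centered at each $R_I$, and showing that the leading contribution is $\frac{8\pi}{|\Gamma|^{1/2}|K|^4}\sum_I Z_I\psi(R_I)e^{-iK\cdot R_I}$, with everything else absorbed into the $o(|K|^{-n})$ error.

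The key computational steps are as follows. First, a shift $x = R_I + y$ produces a factor $e^{-iK\cdot R_I}$ and reduces each model term to a Fourier integral over $\R^3$ (since $\omega$ is compactly supported inside $\Gamma$, the periodic and full-space Fourier transforms agree) of $\omega(|y|)|y|^j c_j(\widehat y)$. Second, the constant ($\ell=0$, $m=0$) piece of $c_1^I$, namely $-Z_I\psi(R_I)$, contributes $-Z_I\psi(R_I)\int_{\R^3}\omega(|y|)|y|\,e^{-iK\cdot y}\,dy$; I would compute $\int_{\R^3}\omega(|y|)|y|\,e^{-iK\cdot y}\,dy$ by reducing the radial integral (using $\int_{S^2}e^{-iK\cdot\widehat y}\,d\widehat y = \frac{4\pi\sin(|K|r)}{|K|r}$ for $|y|=r$) to $\frac{4\pi}{|K|}\int_0^\infty \omega(r)\,r\sin(|K|r)\,dr$, and then extract the leading term as $|K|\to\infty$. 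The relevant fact is that for a smooth cutoff with $\omega(r)=1$ near $0$, $\int_0^\infty \omega(r)\,r\sin(|K|r)\,dr = -\frac{1}{|K|^3} + o(|K|^{-n})$ for every $n$ (integrate by parts repeatedly; the boundary term at $r=0$ from $\omega\equiv 1$ there gives the $-|K|^{-3}$, and all further terms carry derivatives of $\omega$, which vanish near $0$ and are smooth, hence give rapidly decaying contributions). This yields $-Z_I\psi(R_I)\cdot\frac{4\pi}{|K|}\cdot\bigl(-\frac{1}{|K|^3}\bigr) = \frac{4\pi Z_I\psi(R_I)}{|K|^4}$... — here I need to track the sign and the factor carefully against the claimed $8\pi$; the constant ($c_0^I$) term $\psi(R_I)\int\omega(|y|)e^{-iK\cdot y}\,dy$ is itself $O(|K|^{-n})$ for all $n$ since $\omega(|\cdot|)$ is smooth, so it does not contribute, and the remaining factor of $2$ should emerge from the precise normalization of the radial integral. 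Third, the spherical-harmonic pieces of $c_1^I$ with $\ell=1$: these give $\alpha_m\int_{\R^3}\omega(|y|)|y|\,Y_{1m}(\widehat y)\,e^{-iK\cdot y}\,dy$, and since $\omega(|y|)|y|\,Y_{1m}(\widehat y)$ is a Lipschitz but not $C^1$ function whose singularity is of the same order as $|y|$, its Fourier transform decays like $|K|^{-4}$ as well — however, by the Funk–Hecke / plane-wave expansion the angular integral against $Y_{1m}$ picks out the $\ell=1$ spherical Bessel function $j_1(|K|r)$, and I would argue (again by repeated integration by parts, using that $\omega\equiv 1$ near $0$ so the product $\omega(r)r$ has a known Taylor expansion there) that this contribution is actually $O(|K|^{-5})$ or smaller — crucially of lower order than $|K|^{-4}$ — or alternatively note that since $\alpha_m$ are not explicit and the claim only asserts the leading $|K|^{-4}$ coefficient, it suffices to show these terms are $o(|K|^{-4})$; whether they are genuinely smaller needs the Bessel asymptotics. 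Finally, assemble: $\widehat{\psi}_K = \frac{1}{|\Gamma|^{1/2}}\sum_I e^{-iK\cdot R_I}\bigl(\text{leading }|K|^{-4}\text{ term}\bigr) + \widehat{\eta}_K + o(|K|^{-n})$.

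I expect the main obstacle to be bookkeeping of the exact constant and sign in the radial Fourier integral, together with justifying that the $\ell=1$ angular component of $c_1^I$ does not spoil the leading $|K|^{-4}$ coefficient. Concretely, I must verify that $\int_0^\infty\omega(r)\,r\sin(\lambda r)\,dr = -\lambda^{-3} + O(\lambda^{-n})$ with the right sign (integration by parts twice: $\int_0^\infty \omega(r)r\sin(\lambda r)\,dr = [-\omega(r)r\cos(\lambda r)/\lambda]_0^\infty + \lambda^{-1}\int_0^\infty (\omega(r)r)'\cos(\lambda r)\,dr$; the boundary term vanishes, and iterating on $\int (\omega r)'\cos(\lambda r)$ — noting $(\omega r)'=1$ near $0$ — produces the $-\lambda^{-2}$ in the cosine integral and hence $-\lambda^{-3}$ overall) and that combined with the spherical factor $\frac{4\pi}{\lambda}$ this gives $-\frac{4\pi}{\lambda^4}$, so that with the coefficient $-Z_I\psi(R_I)$ in front one obtains $+\frac{4\pi Z_I\psi(R_I)}{\lambda^4}$; the discrepancy with the stated $8\pi$ suggests either a different normalization of $Y_{1m}$-free part of the expansion or that I have mislabeled which term of $c_1^I$ is the constant — this normalization check is where the real care is needed, but it is a finite computation rather than a conceptual difficulty. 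The $o(|K|^{-n})$ tail for the smooth remainders (terms involving derivatives of $\omega$, and the $c_0^I$ contribution) follows from the standard fact that the Fourier transform of a compactly supported smooth function is Schwartz.
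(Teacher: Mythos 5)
Your overall strategy is exactly the paper's: split $\psi$ via the $N=1$ singular expansion, reduce to the Fourier transform of $\omega(|y|)\,|y|$ by shifting to each nucleus, and evaluate the radial oscillatory integral by repeated integration by parts. However, the two points you flag as unresolved are genuine gaps, and both have clean resolutions. First, the constant: your factor-of-two ``discrepancy with the stated $8\pi$'' is not a normalization issue in the expansion but an arithmetic slip in your reduction to the radial integral. Writing $\int_{\R^3}\omega(|y|)\,|y|\,e^{-iK\cdot y}\,dy$ in spherical coordinates, the integrand $|y|=r$ combines with the Jacobian $r^2\,dr$ to give $r^3$, and the angular integral $\int_{S^2}e^{-ir K\cdot\widehat y}\,d\sigma = \frac{4\pi\sin(|K|r)}{|K|r}$ removes only one power of $r$, so the correct reduction is
\begin{equation*}
\int_{\R^3}\omega(|y|)\,|y|\,e^{-iK\cdot y}\,dy \;=\; \frac{4\pi}{|K|}\int_0^\infty \omega(r)\,r^{2}\sin(|K|r)\,dr,
\end{equation*}
with $r^{2}$, not $r$, in the radial integral. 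Two integrations by parts (using $\omega\equiv 1$ near $0$, so $(r^2\omega)''=2\omega+\dots$ contributes $2\omega(0)/\lambda$ at the next step) give $\int_0^\infty\omega(r)r^2\sin(\lambda r)\,dr=-\tfrac{2}{\lambda^{3}}+o(\lambda^{-n})$, hence $-\tfrac{8\pi}{|K|^{4}}$ for the full integral, and with the prefactor $-Z_I\psi(R_I)$ this is exactly the claimed $+\tfrac{8\pi Z_I\psi(R_I)}{|K|^{4}}$. Your version of the radial integral would give $-\lambda^{-2}\cdot$(something) with the wrong power counting altogether, so this is not merely a constant to be chased down later.

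Second, the $\ell=1$ part of $c_1^I$: you leave open whether $\alpha_m\,\omega(|y|)\,|y|\,Y_{1m}(\widehat y)$ contributes at order $|K|^{-4}$, proposing Funk--Hecke and Bessel asymptotics. The paper's (much shorter) observation is that $|y|\,Y_{1m}(\widehat y)$ is a homogeneous degree-one \emph{polynomial} in $y$ (since $Y_{1m}(\widehat y)$ is a linear form in $y$ divided by $|y|$), hence smooth; multiplied by the smooth compactly supported cutoff $\omega$, the whole term is $C^\infty_c$ and its Fourier coefficients are $o(|K|^{-n})$ for every $n$. Your characterization of this term as ``Lipschitz but not $C^1$'' is wrong --- there is no singularity at all in the $\ell=1$ channel --- and without this observation your proof does not establish that the stated $|K|^{-4}$ coefficient is correct. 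The rest of your argument (the $c_0^I$ term and all terms carrying derivatives of $\omega$ being rapidly decaying, the remainder contributing exactly $\widehat\eta_K$) matches the paper and is fine.
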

\begin{proof}
Let $\eta$ be the remainder of the singular expansion of $\psi$ for $N=1$. Hence, $\eta \in \cK^{\infty, \frac{7}{2}-\varepsilon}(\Gamma)$ for all $\varepsilon >0$. Using Theorem \ref{thm:psi_well_behaved_periodic} and noticing that $|x-R_I| Y_{1m}(\widehat{x-R_I})$ is a polynomial hence a smooth function,  we have
\begin{align*}
\int_\Gamma \psi(x) e^{-iK\cdot x}\, \mathrm{d}x & = \int_{\Gamma} \sumlim{I=1}{N_\mathrm{at}} \omega(|x-R_I|) \sumlim{j=0}{1} c_j^I(\widehat{x-R_I}) |x-R_I|^j e^{-i K \cdot x} \, \mathrm{d} x + \int_{\Gamma} \eta(x) e^{-iK\cdot x} \, \mathrm{d}x \\
& = -\int_{\Gamma} \sumlim{I=1}{N_\mathrm{at}} \omega(|x-R_I|) Z_I \psi(R_I) |x-R_I|  e^{-i K \cdot x} \, \mathrm{d} x + \int_{\Gamma} \eta(x) e^{-iK\cdot x} \, \mathrm{d}x + o\left( \frac{1}{K^n} \right) .
\end{align*}
$\omega$ is a smooth function such that $\omega = 1$ near 0 and $0$ outside a neighbourhood of 0, hence for each \linebreak $I=1,\dots,N_\mathrm{at}$, we have
\[
\int_\Gamma \omega(|x-R_I|) |x-R_I|  e^{-i K \cdot x} \, \mathrm{d} x = e^{-i K \cdot R_I} \int_{B(0,R)} \omega(|x|) |x| e^{-i K \cdot x} \, \mathrm{d}x.
\]
Written in spherical coordinates and using the radial symmetry to replace $K \cdot x$ by $Kr \cos(\theta)$, this yields
\[
\int_\Gamma \omega(|x-R_I|) |x-R_I|  e^{-i K \cdot x} \, \mathrm{d} x = e^{-i K \cdot R_I} \int_{B(0,R)} \omega(r) e^{-i|K|r\cos(\theta)} r^3 \sin(\theta) \, \mathrm{d}\phi \, \mathrm{d}\theta \, \mathrm{d}r.
\]
Thus we have
\begin{align*}
\int_{B(0,R)} \omega(r) e^{-i|K|r\cos(\theta)} r^3 \sin(\theta) \, \mathrm{d}\phi \, \mathrm{d}\theta \, \mathrm{d}r & = 2 \pi \int_{0}^R \omega(r) r^3 \int_0^\pi e^{-i|K|r\cos(\theta)} \, \mathrm{d} \theta \, \mathrm{d}r \\
& = \frac{2\pi}{i|K|} \int_0^R \omega(r) \left[ e^{-i|K|r\cos(\theta)} \right]_0^\pi r^2 \, \mathrm{d} r \\
& = \frac{4\pi}{|K|} \int_0^R \omega(r) \sin(|K|r) r^2 \, \mathrm{d} r.
\end{align*}
Using successive integration by parts and noticing that $\omega' \in C^\infty_c(0,R)$, we have for any $n \in \N$
\[
\int_{B(0,R)} \omega(r) e^{-i|K|r\cos(\theta)} r^3 \sin(\theta) \, \mathrm{d}\phi \, \mathrm{d}\theta \, \mathrm{d}r = -\frac{8\pi}{|K|^4} + o\left( \frac{1}{|K|^n} \right).
\]
\end{proof}

\subsection{Error estimates}

\begin{lem}
\label{lem:E_M-E}
Let $\psi_i^M$ be the $i$-th eigenvalue $E_i^M$ of the variational approximation \eqref{eq:discrete_eigenvalue_problem}. Let $\psi_i$ be the corresponding eigenfunction associated to the eigenvalue $E_i$ such that \eqref{eq:exact_eigenfunction_correspondance} holds.

The error on the discretized eigenvalue is given by
\begin{equation}
\label{eq:error_formula}
E_i^M - E_i = -\left(\int_{\Gamma} V \Pi_M^\perp \psi_i \psi_i^M \right) \left(1 + \cO \left( \frac{1}{M^{5/2-\varepsilon}} \right)\right).
\end{equation}
\end{lem}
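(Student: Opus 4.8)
The plan is to start from the Galerkin orthogonality and the eigenvalue equations, and convert the error $E_i^M-E_i$ into an expression involving only $V_{\mathrm{per}}$ acting on the projection error $\Pi_M^\perp\psi_i$. Write $\psi:=\psi_i$, $\psi^M:=\psi_i^M$, $E:=E_i$, $E^M:=E_i^M$ for brevity. Since $\psi^M\in V_M$, testing the exact equation $H\psi=E\psi$ against $\psi^M$ gives $\int_\Gamma(-\tfrac12\Delta\psi+V_{\mathrm{per}}\psi+W_{\mathrm{per}}\psi)\psi^M=E\int_\Gamma\psi\psi^M$, while the discrete equation $\eqref{eq:discrete_eigenvalue_problem}$ tested against $\Pi_M\psi\in V_M$ gives $\int_\Gamma(-\tfrac12\Delta\psi^M+V\psi^M)\Pi_M\psi=E^M\int_\Gamma\psi^M\Pi_M\psi$. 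Subtracting these (after symmetrizing the $-\tfrac12\Delta$ and potential terms, using that $\Delta$ and multiplication by $V$ are symmetric, and that $\langle\psi^M,\Pi_M\psi\rangle=\langle\psi^M,\psi\rangle$ since $\psi^M\in V_M$), the kinetic and $V_{\mathrm{per}}+W_{\mathrm{per}}$ contributions on the $V_M$ part collapse, and one is left with a remainder supported on $\Pi_M^\perp\psi$. The identity I expect to extract is of the form
\begin{equation}
(E^M-E)\,\langle\psi^M,\psi\rangle = -\int_\Gamma V\,\Pi_M^\perp\psi\,\psi^M,
\end{equation}
where $V$ denotes the full potential $V_{\mathrm{per}}+W_{\mathrm{per}}$ (or just $V_{\mathrm{per}}$, depending on how $W_{\mathrm{per}}$ is absorbed into the notation of $\eqref{eq:discrete_eigenvalue_problem}$); the kinetic term $\int_\Gamma\nabla\psi\cdot\nabla(\Pi_M^\perp\psi)$ vanishes because $\Pi_M$ is the orthogonal projector for $(e_K)$ which diagonalizes $-\Delta$, so $\Pi_M^\perp\psi$ is $H^1$-orthogonal to $V_M\ni\psi^M$ — more precisely $\langle\nabla\psi^M,\nabla\Pi_M^\perp\psi\rangle=0$.

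Next I would control the normalization factor $\langle\psi^M,\psi\rangle$. Since $\|\psi\|_{L^2}=\|\psi^M\|_{L^2}=1$ and, by Lemma~\ref{lem:basic_estimate_psi_M}(2) with $s=0$, $\|\psi^M-\psi\|_{L^2}\leq C_\varepsilon M^{-5/2+\varepsilon}$, we get $\langle\psi^M,\psi\rangle = 1-\tfrac12\|\psi^M-\psi\|_{L^2}^2 = 1+\cO(M^{-5+\varepsilon})$. Actually the cleaner route is $\langle\psi^M,\psi\rangle = 1 - \langle\psi^M,\psi^M-\psi\rangle$ combined with $\langle\psi^M,\psi^M-\psi\rangle = \tfrac12(\|\psi^M\|^2+\|\psi^M-\psi\|^2-\|\psi\|^2)=\tfrac12\|\psi^M-\psi\|_{L^2}^2$; either way the factor is $1+\cO(M^{-5/2-\varepsilon})$, which is more than enough for the claimed $(1+\cO(M^{-5/2+\varepsilon}))$. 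Dividing the displayed identity by this factor yields exactly $\eqref{eq:error_formula}$, with the $\cO(M^{-5/2-\varepsilon})$ in the statement being a (comfortably) conservative bound on the correction from inverting $\langle\psi^M,\psi\rangle$. One should note that $E^M-E\geq 0$ and $\to 0$, so no division-by-zero issue arises and the factor is bounded away from $0$ for $M\geq M_0$.

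The main obstacle — really the only subtle point — is justifying the manipulations with $V_{\mathrm{per}}$, which is only an $H^{1/2-\varepsilon}_{\mathrm{per}}(\Gamma)$ function and hence unbounded (Coulomb singularity). The bilinear form $u,v\mapsto\int_\Gamma V_{\mathrm{per}}uv$ is not defined on all of $L^2$, so the pairing $\int_\Gamma V\,\Pi_M^\perp\psi\,\psi^M$ and all the intermediate integrals must be interpreted via the $H^1/H^{-1}$ (or form-domain) duality: $V_{\mathrm{per}}u\in H^{-1}_{\mathrm{per}}$ for $u\in H^1_{\mathrm{per}}$ by the $H^1$-boundedness and coercivity of the form already invoked before $\eqref{eq:exact_eigenfunction_correspondance}$, and $\Pi_M^\perp\psi,\psi^M\in H^1_{\mathrm{per}}$. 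With that reading, every term above is a well-defined continuous bilinear form on $H^1_{\mathrm{per}}\times H^1_{\mathrm{per}}$, the symmetry $\int V\,uv=\int V\,vu$ holds, and the Galerkin subtraction is legitimate. I would state this duality interpretation once at the start of the proof and then carry out the algebra formally; the rest is the bookkeeping sketched above.
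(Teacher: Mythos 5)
Your proposal is correct and follows essentially the same route as the paper: test the exact equation against $\psi_i^M$ and the discrete equation against $\Pi_M\psi_i$, subtract, use the $L^2$- and $H^1$-orthogonality of $\Pi_M^\perp$ to $V_M$ to kill the mass and kinetic terms, and then bound $\langle\psi_i^M,\psi_i\rangle=1+\cO(M^{-5/2+\varepsilon})$ via Lemma~\ref{lem:basic_estimate_psi_M} (your normalization identity even gives the sharper $1+\cO(M^{-5+\varepsilon})$). The remark on interpreting $\int_\Gamma V\,uv$ through the form domain is a sound precaution, though here all integrands are in fact absolutely integrable since $V_\mathrm{per}\in L^2_\mathrm{per}(\Gamma)$ and $\Pi_M^\perp\psi_i,\psi_i^M\in L^\infty$.
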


\begin{proof}
Let $\psi_i^M$ be an $L^2$-normalized eigenfunction of the variational approximation \eqref{eq:discrete_eigenvalue_problem} satisfying
\begin{equation}
\label{eq:psi_iM}
\forall f \in V_M, \ \int_{\Gamma} (-\frac{1}{2} \Delta \psi_j^M + V \psi_j^M)f  = E_j^M \int_{\Gamma} \psi_j^M f, \quad \int_{\Gamma} \psi_j^M \psi_k^M = \delta_{jk}. 
\end{equation}
Let $\psi_i$ be the corresponding eigenfunction such that  \eqref{eq:exact_eigenfunction_correspondance} is satisfied. For this eigenfunction, we have
\begin{equation}
\label{eq:psi_i}
\forall f \in L^2(\Gamma), \ \int_{\Gamma} (-\frac{1}{2} \Delta \psi_j + V \psi_j)f  = E_j \int_{\Gamma} \psi_j f. 
\end{equation}
Taking $f = \Pi_M  \psi_i$ in \eqref{eq:psi_iM} and $f = \psi_i^M$ in \eqref{eq:psi_i} and subtracting both equations, we obtain 
\begin{equation*}
E_i^M \int_{\Gamma} \psi_i^M \Pi_M \psi_i - E_i \int_{\Gamma} \psi_i \psi_i^M = \int_{\Gamma}(-\frac{1}{2} \Delta \psi_i^M + V \psi_i^M) \Pi_M \psi_i - \int_\Gamma   (-\frac{1}{2} \Delta \psi_i + V \psi_i) \psi_i^M.
\end{equation*}
Since the plane-waves $(e_K)_{K \in \mathcal{R}^*}$ are orthogonal in $L^2_\mathrm{per}(\Gamma)$ and in $H^{1}_\mathrm{per(\Gamma)}$, we have
\begin{equation*}
\int_{\Gamma} \psi_i^M \Pi_M \psi_i = \int_{\Gamma} \psi_i^M  \psi_i, \qquad \text{and} \qquad \int_{\Gamma}-\frac{1}{2} \Delta \psi_i^M \Pi_M \psi_i = \int_{\Gamma}-\frac{1}{2} \Delta \psi_i^M \psi_i.
\end{equation*}
Thus, we have
\begin{align*}
(E_i^M - E_i)\int_{\Gamma} \psi_i^M  \psi_i  = \int_\Gamma V \Pi_M \psi_i \psi_i^M - \int_\Gamma V \psi_i \psi_i^M = -\int_\Gamma V \Pi_M^\perp \psi_i \psi_i^M .
\end{align*}
Since for all $\varepsilon >0$, $\psi_i \in H^{5/2-\varepsilon}_\mathrm{per}(\Gamma)$ and is such that \eqref{eq:exact_eigenfunction_correspondance} holds, we have
\begin{equation*}
\int_{\Gamma} \psi_i^M \psi_i = \int_{\Gamma} |\psi_i^M|^2 - \int_\Gamma \psi_i^M (\psi_i - \psi_i^M) = 1 + \cO\left(\frac{1}{M^{5/2-\varepsilon}} \right).
\end{equation*}
\end{proof}

Since Equations \eqref{eq:fourier_coefficient_asymptotic} and \eqref{eq:error_formula} do not involve other energy levels, we drop the index $i$ in the remainder of the proof. 

\begin{lem}
\label{lem:remainder_explicited}
Let $M \in \N$ and $V$ the potential of the periodic Hamiltonian in \eqref{eq:H_per}. Then
\begin{equation}
\int_{\Gamma} V \Pi_M^\perp \psi \psi^M = -\frac{4 \pi}{|\Gamma|} \sumlim{I=1}{N_\mathrm{at}} \sumlim{|K'|> \frac{2 \pi}{L} M}{} \sumlim{|K| \leq \frac{2 \pi}{L} M}{} Z_I \frac{e^{-i K \cdot R_I} e^{iK' \cdot R_I}}{|K'-K|^2} \widehat{\psi}_K^*\widehat{\psi}_{K'} + \cO \left( \frac{1}{M^{9/2-\varepsilon}} \right).
\end{equation}
\end{lem}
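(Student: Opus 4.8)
The plan is to expand $\Pi_M^\perp\psi$, $\psi^M$, and the potential $V$ in the plane-wave basis and track which terms survive to the stated order. First I would write $\int_\Gamma V\,\Pi_M^\perp\psi\,\psi^M$ in Fourier coordinates: with $V=V_\mathrm{per}+W_\mathrm{per}$, the smooth part $W_\mathrm{per}$ has rapidly decaying Fourier coefficients, so since $\Pi_M^\perp\psi$ is supported on $|K'|>\tfrac{2\pi}{L}M$ and $\psi^M$ on $|K|\le\tfrac{2\pi}{L}M$ with the estimates of Lemma~\ref{lem:basic_estimate_psi_M}, the $W_\mathrm{per}$ contribution is $\cO(M^{-\infty})$ and can be discarded. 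From \eqref{eq:Vper}, the Fourier coefficients of $V_\mathrm{per}$ are $\widehat{(V_\mathrm{per})}_Q = \tfrac{4\pi}{|\Gamma|}\tfrac{1}{|Q|^2}\sum_I Z_I e^{-iQ\cdot R_I}$ for $Q\neq 0$ (and $0$ for $Q=0$). Substituting and using $\int_\Gamma e_Q e_K^* e_{K'} = |\Gamma|^{-1/2}\delta_{Q,K'-K}$ gives exactly a triple sum of the stated form but with $\widehat{\psi^M}_{K'}$ in place of $\widehat{\psi}_{K'}$ — except that $\widehat{\psi^M}_{K'}=0$ for $|K'|>\tfrac{2\pi}{L}M$ because $\psi^M\in V_M$, so the $\Pi_M^\perp$ forces us to instead keep $\widehat{\psi}_{K'}$ on the high-frequency side. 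So the structure is: $\int_\Gamma V_\mathrm{per}\,\Pi_M^\perp\psi\,\psi^M = \tfrac{4\pi}{|\Gamma|}\sum_I Z_I \sum_{|K'|>\frac{2\pi}{L}M}\sum_{|K|\le\frac{2\pi}{L}M} \tfrac{e^{-i(K'-K)\cdot R_I}}{|K'-K|^2}\,\widehat{\psi}_{K'}\,\overline{\widehat{\psi^M}_K}$, up to conjugation conventions.

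Next I would replace $\widehat{\psi^M}_K$ by $\widehat{\psi}_K$ on the low-frequency side. By Lemma~\ref{lem:basic_estimate_psi_M}(2), $\|\psi^M-\psi\|_{H^s}\le C_{s,\varepsilon}M^{-(5/2-s-\varepsilon)}$, so I can bound the error from this swap by Cauchy–Schwarz after inserting weights $(1+|K|^2)^{\pm s}$ and $(1+|K'|^2)^{\pm t}$ chosen to make both factors controllable; the kernel $|K'-K|^{-2}$ is the delicate piece. The natural route is to note $|K'-K|\ge |K'|-|K| \ge c|K'|$ when $|K|\le\tfrac{2\pi}{L}M<|K'|$ is not always true, so instead I would split the $K$-sum into $|K|\le\tfrac12|K'|$ (where $|K'-K|\ge\tfrac12|K'|$) and $\tfrac12|K'|<|K|\le\tfrac{2\pi}{L}M$ (a thin shell). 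For the main region the kernel is $\lesssim |K'|^{-2}$, and combining $\sum_{K'}|K'|^{-8}|\widehat{\psi}_{K'}|^{-2}$-type bounds with the $H^s$ estimates yields the $\cO(M^{-9/2-\varepsilon})$ remainder; for the shell one uses that $\psi\in H^{5/2-\varepsilon}$ concentrates little mass there. I should also handle the conjugation: since $\psi$ is real, $\widehat{\psi}_{-K}=\overline{\widehat{\psi}_K}$, so rewriting indices to match the statement's $\widehat{\psi}_K^*\widehat{\psi}_{K'}$ with the phases $e^{-iK\cdot R_I}e^{iK'\cdot R_I}$ is bookkeeping.

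The remaining task is to confirm that no other contribution of size $M^{-9/2+\varepsilon}$ or larger has been dropped: the $W_\mathrm{per}$ term, the $Q=0$ mode of $V_\mathrm{per}$ (which vanishes), and the $\psi^M\to\psi$ substitution are the only sources of error, and each is $\cO(M^{-9/2-\varepsilon})$ or better. I would also double-check that the prefactor in \eqref{eq:Vper} — the $4\pi$ and the $1/|Q|^2$ from inverting $-\Delta$ on plane waves ($-\Delta e_Q = |Q|^2 e_Q$) — produces exactly the $-\tfrac{4\pi}{|\Gamma|}$ in the claim, with the overall sign coming from $\int_\Gamma V\Pi_M^\perp\psi\,\psi^M$ versus the sign in \eqref{eq:error_formula}. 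The main obstacle is the estimate on the error in the $\widehat{\psi^M}_K \mapsto \widehat{\psi}_K$ replacement: the singular kernel $|K'-K|^{-2}$ couples the two frequency ranges, so one must choose the Sobolev exponents $s$ on each side carefully (balancing the $5/2-s-\varepsilon$ decay of $\|\psi^M-\psi\|_{H^s}$ against the $5/2-\varepsilon$ regularity of $\psi$ and the two powers of $|K'-K|^{-1}$) so the product genuinely lands at order $M^{-9/2+\varepsilon}$ rather than something weaker; getting the near-diagonal shell $|K|\approx|K'|$ under control is where the bookkeeping is tightest.
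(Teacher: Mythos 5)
Your skeleton matches the paper's: split $V=V_{\mathrm{per}}+W_{\mathrm{per}}$, discard the smooth part $W_{\mathrm{per}}$ (the paper bounds $\int_\Gamma W_{\mathrm{per}}\Pi_M^\perp\psi\,\Pi_M\psi$ by $C_n M^{-n}$ via the smoothing of $\Pi_M W_{\mathrm{per}}\Pi_M^\perp$), expand $V_{\mathrm{per}}$ in plane waves using \eqref{eq:Vper}, and use orthogonality to get the double sum. That part of your plan is sound.

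The gap is in how you close the replacement of $\widehat{\psi^M}_K$ by $\widehat{\psi}_K$ in the low-frequency slot. Note first that this swap error is exactly $\int_\Gamma V_{\mathrm{per}}\,\Pi_M^\perp\psi\,(\psi^M-\Pi_M\psi)$ written in Fourier variables (the coefficients of $\psi^M-\Pi_M\psi$ are precisely $\widehat{\psi^M}_K-\widehat{\psi}_K$ for $|K|\le \tfrac{2\pi}{L}M$, and vanish for larger $|K|$). The paper never touches the kernel $|K'-K|^{-2}$ for this term: it bounds the integral at the function level by
\[
\bigl|\textstyle\int_\Gamma V\,\Pi_M^\perp\psi\,(\psi^M-\Pi_M\psi)\bigr|\le \|V\Pi_M^\perp\psi\|_{L^2}\,\|\psi^M-\Pi_M\psi\|_{L^2}\lesssim \|V\|_{H^{-1/2-\varepsilon}}\|\Pi_M^\perp\psi\|_{H^{1/2+\varepsilon}}\cdot M^{-5/2+\varepsilon}\lesssim M^{-9/2+\varepsilon},
\]
using Lemma~\ref{lem:basic_estimate_psi_M}. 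Your proposed route — coefficient-level Cauchy--Schwarz through $|K'-K|^{-2}$ with a split into $|K|\le\tfrac12|K'|$ and a near-diagonal shell — is not just ``tight bookkeeping'': the naive kernel sums diverge (for fixed $K$, $\sum_{|K'|>\frac{2\pi}{L}M}|K'-K|^{-2}$ is infinite in three dimensions), so you must exploit the decay of $\widehat{\psi}_{K'}$ inside the shell with a genuinely multi-scale decomposition. That is exactly the hard estimate the paper defers to Lemma~\ref{lem:double_sum_split} for the \emph{main} term, and as sketched you have not shown your version of it lands at $M^{-9/2+\varepsilon}$. The fix is simple: do the swap before Fourier-expanding, replacing $\psi^M$ by $\Pi_M\psi$ and invoking the duality bound above; the singular kernel then only ever appears in the main term with the exact coefficients $\widehat{\psi}_K^*\widehat{\psi}_{K'}$, as in the statement.
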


\begin{proof}
We have
\begin{align*}
\int_\Gamma V \Pi_M^\perp \psi \psi_M &= \int_\Gamma (V_\mathrm{per} + W_\mathrm{per}) \Pi_M^\perp \psi \psi_M \\
&= \int_{\Gamma} V_\mathrm{per} \Pi_M^\perp \psi \Pi_M \psi + \int_{\Gamma} W_\mathrm{per} \Pi_M^\perp \psi \Pi_M\psi + \int_{\Gamma} V \Pi_M^\perp \psi (\psi_M - \Pi_M \psi ).
\end{align*}
We first bound the last two terms. Using Lemma \ref{lem:basic_estimate_psi_M}, we have
\begin{align*}
\left| \int_{\Gamma} V \Pi_M^\perp \psi (\psi_M - \Pi_M \psi) \right| & \leq \left\| V \Pi_M^\perp \psi \right\|_{L^2} \left\| \psi_M - \Pi_M \psi \right\|_{L^2} \\
& \leq \frac{C_\varepsilon}{M^{5/2-\varepsilon/2}} \|V\|_{H^{-1/2-\varepsilon/2}} \|\Pi_M^\perp \psi\|_{H^{1/2+\varepsilon/2}} \\
& \leq \frac{C_\varepsilon}{M^{9/2-\varepsilon}},
\end{align*}
for some positive constant $C_\varepsilon$ independent of $M$. 
Since $W_\mathrm{per}$ is smooth, we have for any $n \in \N^*$, a positive constant $C_n$ independent of $M$ such that
\begin{align*}
\left| \int_{\Gamma} W_\mathrm{per} \Pi_M^\perp \psi \Pi_M \psi \right| & \leq \|\Pi_M W_\mathrm{per} \Pi_M^\perp\|_{L^2} \|\Pi_M^\perp \psi\|_{L^2} \|\Pi_M\psi \|_{L^2} \\
& \leq \frac{C_n}{M^n}.
\end{align*}
Finally since $V_\mathrm{per}$ is defined by \eqref{eq:Vper}, we have
\begin{align*}
\int_{\Gamma} V_\mathrm{per} \Pi_M^\perp \psi \Pi_M \psi & =- \frac{4\pi}{|\Gamma|} \sumlim{I=1}{N_\mathrm{at}} 
Z_I \int_{\Gamma} \sumlim{K \not= 0}{}  \frac{e^{-iK \cdot R_I}}{|K|^2} e^{iK\cdot x} \frac{1}{|\Gamma|^{1/2}}
\sumlim{|K'|> \frac{2\pi}{L} M}{} \widehat{\psi}_{K'} e^{i K' \cdot x} \frac{1}{|\Gamma|^{1/2}} \sumlim{|K''| \leq \frac{2\pi}{L} M}{} \widehat{\psi}_{K''} e^{iK'' \cdot x} \, \mathrm{d}x \\
& =- \frac{4\pi}{|\Gamma|^2} \sumlim{I=1}{N_\mathrm{at}} 
Z_I \int_{\Gamma} \sumlim{|K'| > \frac{2\pi}{L}M}{} \sumlim{K \not=K'}{} \frac{e^{-iK\cdot R_I} e^{iK'\cdot R_I}}{|K-K'|^2} \widehat{\psi}_{K'} e^{i K\cdot x} \sumlim{|K''| \leq \frac{2\pi}{L} M}{} \widehat{\psi}_{K''} e^{iK'' \cdot x} \, \mathrm{d}x \\
& =- \frac{4\pi}{|\Gamma|}  \sumlim{I=1}{N_\mathrm{at}} \sumlim{|K'|> \frac{2 \pi}{L} M}{} \sumlim{|K| \leq \frac{2 \pi}{L} M}{} Z_I \frac{e^{-i K \cdot R_I} e^{iK' \cdot R_I}}{|K'-K|^2} \widehat{\psi}_K^* \widehat{\psi}_{K'} ,
\end{align*}
where we used that $\int_{\Gamma} e^{iK\cdot x} e^{iK''\cdot x} \, \mathrm{d}x = |\Gamma| \delta_{K+K''}$ and $\widehat{\psi}_{-K''} = \widehat{\psi}_{K''}^*$ by definition of the variational space $V_M$.
\end{proof}

\begin{lem}
\label{lem:double_sum_split}
Let $M_0$ be a positive constant. Then for all $\varepsilon>0$ and $1 \leq I \leq N_\mathrm{at}$, there exists a constant $C_{\varepsilon, M_0}$ such that for all $M \geq M_0$ we have 
\begin{multline} 
\left| \sumlim{|K'|>\frac{2\pi}{L}M}{} \sumlim{|K|\leq \frac{2\pi}{L}M}{} \frac{e^{-iK \cdot R_I}\widehat{\psi}_K^* e^{i K' \cdot R_I }\widehat{\psi}_{K'} }{ |K-K'|^2} - \frac{8 {\pi}}{|\Gamma|^{1/2}} \sumlim{J=1}{N_\mathrm{at}} Z_J  \sumlim{|K|\leq \frac{2\pi}{L}M}{} \widehat{\psi}_K^* e^{-iK \cdot R_I}  \sumlim{|K'| > \frac{2\pi}{L}M}{} \frac{e^{i K' \cdot(R_I-R_J) } }{|K'|^6 } \right| 
\leq  \frac{C_{\varepsilon,M_0}}{M^{4-\varepsilon}} .
\end{multline}
\end{lem}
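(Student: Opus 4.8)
The plan is to carry out the inner sum over the resolved modes $|K|\le\frac{2\pi}{L}M$ first, exploiting that in every term $|K|$ is strictly shorter than $|K'|$, and then to feed in the asymptotics of $\widehat{\psi}_{K'}$ from Lemma~\ref{lem:psi_expansion_in_M}. Since $|K|\le\frac{2\pi}{L}M<|K'|$ we have
\[
\left|\frac{1}{|K-K'|^2}-\frac{1}{|K'|^2}\right|=\frac{\bigl|2K\cdot K'-|K|^2\bigr|}{|K-K'|^2\,|K'|^2}\le\frac{3|K|}{|K'|\,|K-K'|^2},
\]
so replacing $|K-K'|^{-2}$ by $|K'|^{-2}$ decouples the double sum into $\Bigl(\sum_{|K|\le\frac{2\pi}{L}M}\widehat{\psi}_K^*e^{-iK\cdot R_I}\Bigr)\Bigl(\sum_{|K'|>\frac{2\pi}{L}M}e^{iK'\cdot R_I}\widehat{\psi}_{K'}/|K'|^2\Bigr)$. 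Substituting $\widehat{\psi}_{K'}=\frac{8\pi}{|\Gamma|^{1/2}|K'|^4}\sum_{J=1}^{N_\mathrm{at}}Z_J\psi(R_J)e^{-iK'\cdot R_J}+\widehat{\eta}_{K'}+o(|K'|^{-n})$ from Lemma~\ref{lem:psi_expansion_in_M} into the last factor produces its leading behaviour $\frac{8\pi}{|\Gamma|^{1/2}}\sum_{J}Z_J\psi(R_J)\sum_{|K'|>\frac{2\pi}{L}M}e^{iK'\cdot(R_I-R_J)}/|K'|^6$, which together with the $K$-sum gives the term subtracted in the statement. There remain three error contributions to bound by $\cO(M^{-4+\varepsilon})$: the Coulomb-kernel replacement, the remainder $\widehat{\eta}_{K'}$, and the $o(|K'|^{-n})$ tail.

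The Coulomb-kernel error is at most $3\sum_{|K'|>\frac{2\pi}{L}M}\frac{|\widehat{\psi}_{K'}|}{|K'|}\sum_{|K|\le\frac{2\pi}{L}M}\frac{|K|\,|\widehat{\psi}_K|}{|K-K'|^2}$. I would split the inner sum at $|K|=\tfrac12|K'|$. On $|K|\le\tfrac12|K'|$ one has $|K-K'|\ge\tfrac12|K'|$, so this part is $\lesssim|K'|^{-2}\sum_{K}|K|\,|\widehat{\psi}_K|\lesssim|K'|^{-2}\log M$, the series being estimated by writing $\widehat{\psi}_K$ as its explicit $|K|^{-4}$ part from Lemma~\ref{lem:psi_expansion_in_M} plus $\widehat{\eta}_K$ and using Cauchy--Schwarz with $\eta\in H^{7/2-\varepsilon}_\mathrm{per}(\Gamma)$ (the remainder of the $N=1$ singular expansion, well-behaved by Theorem~\ref{thm:psi_well_behaved_periodic}). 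On $\tfrac12|K'|<|K|\le\frac{2\pi}{L}M$ one again splits $\widehat{\psi}_K$, using the $|K|^{-4}$ decay of the explicit part together with the three-dimensional lattice-point estimates $\sum_{|K|\le\frac{2\pi}{L}M}|K-K'|^{-2}\lesssim M$ and $\sum_{|K|\le\frac{2\pi}{L}M}|K-K'|^{-4}\lesssim 1$ (both uniform in $K'$). Altogether the inner sum is $\lesssim|K'|^{-2}\log M$; summing against $|\widehat{\psi}_{K'}|/|K'|$ over $|K'|>\frac{2\pi}{L}M$ (once more separating the $|K'|^{-4}$ part from $\widehat{\eta}_{K'}$) gives a total of order $M^{-4}\log M=\cO(M^{-4+\varepsilon})$.

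The $\widehat{\eta}_{K'}$ remainder is handled by Cauchy--Schwarz: since $\eta\in H^{7/2-\varepsilon}_\mathrm{per}(\Gamma)$ one has $\sum_{|K'|>\frac{2\pi}{L}M}|\widehat{\eta}_{K'}|^2\lesssim M^{-7+\varepsilon}$, hence
\[
\Bigl|\sum_{|K'|>\frac{2\pi}{L}M}\frac{e^{iK'\cdot R_I}\widehat{\eta}_{K'}}{|K'|^2}\Bigr|\le\Bigl(\sum_{|K'|>\frac{2\pi}{L}M}|\widehat{\eta}_{K'}|^2\Bigr)^{1/2}\Bigl(\sum_{|K'|>\frac{2\pi}{L}M}|K'|^{-4}\Bigr)^{1/2}\lesssim M^{-7/2+\varepsilon}\cdot M^{-1/2}=M^{-4+\varepsilon},
\]
while the $o(|K'|^{-n})$ tail contributes $\cO(M^{-n+1})$, negligible once $n$ is chosen large. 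Since $\bigl|\sum_{|K|\le\frac{2\pi}{L}M}\widehat{\psi}_K^*e^{-iK\cdot R_I}\bigr|\le\sum_K|\widehat{\psi}_K|<\infty$ uniformly in $M$, multiplying the decoupled $K$-sum back onto each of these errors preserves the $\cO(M^{-4+\varepsilon})$ bound, and summing the contributions yields the lemma. The delicate point is the Coulomb-kernel replacement in the regime $|K|\sim|K'|\sim M$, where $|K-K'|$ can be as small as $\tfrac{2\pi}{L}$ and one cannot afford to lose more than a logarithm; this is precisely why one needs the pointwise $|K|^{-4}$ decay of the leading part of $\widehat{\psi}_K$ furnished by Lemma~\ref{lem:psi_expansion_in_M}, rather than only the $\ell^2$ information $\psi\in H^{5/2-\varepsilon}_\mathrm{per}(\Gamma)$, combined with the sharp lattice-point count $\sum_{|K|\le R}|K-K'|^{-2}\lesssim R$.
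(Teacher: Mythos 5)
Your proposal is correct, and although its skeleton (substitute the asymptotics of $\widehat{\psi}_{K'}$ from Lemma~\ref{lem:psi_expansion_in_M}, decouple the double sum by replacing $|K-K'|^{-2}$ with $|K'|^{-2}$, bound the resulting error terms) coincides with the paper's, the treatment of the decisive kernel-replacement error is genuinely different. The paper applies Cauchy--Schwarz to the whole double sum, which reduces matters to the pure lattice sum $\sum_{|K|\leq \frac{2\pi}{L}M}\sum_{|K'|>\frac{2\pi}{L}M}|K|^{-3+\varepsilon}|K-K'|^{-4}|K'|^{-5}$; its near-diagonal region $|K|\sim|K'|\sim M$, $|K-K'|=\cO(1)$ then forces the elaborate multi-scale splitting with exponents $\alpha_k=\frac{n+1-k}{n+3}$ and a limit $n\to\infty$. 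You instead retain the pointwise $|K|^{-4}$ decay of the leading part of $\widehat{\psi}_K$ in the \emph{inner} variable as well, so that a single split at $|K|=\tfrac12|K'|$ combined with the standard counts $\sum_{|K|\leq \frac{2\pi}{L}M}|K-K'|^{-2}\lesssim M$ and $\sum_{K\neq K'}|K-K'|^{-4}\lesssim 1$ suffices, at the cost of a logarithm (harmless against $M^{-\varepsilon}$). This is shorter and avoids the iteration entirely; the price is invoking Lemma~\ref{lem:psi_expansion_in_M} in both variables rather than only in $K'$. Your handling of the $\widehat{\eta}_{K'}$ contribution also differs: you bound the decoupled tail sum directly by Cauchy--Schwarz using $\eta\in H^{7/2-\varepsilon}_\mathrm{per}(\Gamma)$, whereas the paper rewrites it as $\int_\Gamma V_\mathrm{per}\Pi_M^\perp\eta\,\Pi_M\psi$ and uses an operator-norm bound; both yield at least $\cO(M^{-4+\varepsilon})$. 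One bookkeeping remark: your main term correctly carries the factor $\psi(R_J)$ coming from the cusp expansion in Lemma~\ref{lem:psi_expansion_in_M}; the lemma as stated (and its proof in the paper) omits it, which is a typo that gets compensated later in the proof of Theorem~\ref{thm:error_cancellation}, so your version is the internally consistent one.
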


\begin{proof}
By Lemma \ref{lem:psi_expansion_in_M}, we have
\begin{multline*}
\sumlim{|K'|>\frac{2\pi}{L}M}{} \sumlim{|K|\leq \frac{2\pi}{L}M}{} \frac{e^{-iK \cdot R_I}\widehat{\psi}_K^* e^{i K' \cdot R_I }\widehat{\psi}_{K'} }{ |K-K'|^2} = \frac{8 {\pi}}{|\Gamma|^{1/2}} \sumlim{J=1}{N_\mathrm{at}} Z_J \sumlim{|K'|>\frac{2\pi}{L}M}{} \sumlim{|K|\leq \frac{2\pi}{L}M}{} \frac{e^{-iK \cdot R_I} \widehat{\psi}_K^* e^{i K' \cdot R_I } e^{-i K' \cdot R_J} }{|K'|^4 |K-K'|^2} \\
+ \sumlim{|K'|>\frac{2\pi}{L}M}{} \sumlim{|K|\leq \frac{2\pi}{L}M}{} \frac{e^{-iK \cdot R_I} \widehat{\psi}_K^* e^{i K' \cdot R_I }\eta_{K'} }{|K-K'|^2},
\end{multline*}
where in an abuse of notation we have included the $o\left(\frac{1}{|K'|^n} \right)$ in $\eta_{K'}$. 

The second double sum can be rewritten
\[
\sumlim{|K'|>\frac{2\pi}{L}M}{} \sumlim{|K|\leq \frac{2\pi}{L}M}{} \frac{e^{-iK \cdot R_I} \widehat{\psi}_K^* e^{i K' \cdot R_I }\eta_{K'} }{|K-K'|^2} = \int_\Gamma V_\mathrm{per} \Pi^\perp_M \eta \Pi_M \psi.
\]
The operator $\Pi_M V_\mathrm{per} \Pi_M^\perp \ : \ H^{-1/2-\varepsilon}_\mathrm{per}(\Gamma) \to H^{-5/2+\varepsilon}_\mathrm{per}(\Gamma)$ is continuous for all $\varepsilon >0$. 
Let $f \in H^{5/2-\varepsilon}_\mathrm{per}(\Gamma)$ and $g \in H^{1/2+\varepsilon}_\mathrm{per}(\Gamma)$, then we have
\begin{align*}
\langle f, \Pi_M V_\mathrm{per} \Pi_M^\perp g \rangle_{H^{5/2-\varepsilon},H^{-5/2+\varepsilon}} & \leq \langle \Pi_M f,  V_\mathrm{per} \Pi_M^\perp g \rangle_{H^{5/2-\varepsilon},H^{-5/2+\varepsilon}} \\
& \leq \| f \|_{H^{5/2-\varepsilon}} \| V_\mathrm{per} \Pi_M^\perp g \|_{H^{-5/2+\varepsilon}} \\
& \leq \| f \|_{H^{5/2-\varepsilon}} \| V_\mathrm{per} \|_{H^{-1/2-\varepsilon}} \| \Pi_M^\perp g \|_{H^{-2}} \\
& \leq \frac{\| V_\mathrm{per} \|_{H^{-1/2-\varepsilon}}}{M^{3/2-\varepsilon}} \| f \|_{H^{5/2-\varepsilon}}  \| g \|_{H^{-1/2-\varepsilon}}.
\end{align*}
Thus, using item 1 of Lemma~\ref{lem:basic_estimate_psi_M} applied to $\eta \in H^{7/2-\varepsilon/2}_\mathrm{per}(\Gamma)$ for all $\varepsilon>0$, we have for a constant $C_\varepsilon$ independent of $M$
\begin{align*}
\left| \int_\Gamma V_\mathrm{per} \Pi^\perp_M \eta \Pi_M \psi \right| & \leq \frac{C_\varepsilon}{M^{3/2-\varepsilon/2}} \| \Pi_M^\perp \eta \|_{H^{-1/2-\varepsilon/2}} \| \Pi_M \psi \|_{H^{5/2-\varepsilon/2}} \\
& \leq \frac{C_\varepsilon}{M^{11/2-\varepsilon}} \|\eta\|_{H^{7/2-\varepsilon/2}} \|\psi\|_{H^{5/2-\varepsilon/2}} \\
& \leq \frac{C_\varepsilon}{M^{11/2-\varepsilon}}.
\end{align*}

It remains to show that for all $1 \leq I,J \leq N_\mathrm{at}$, there exists a constant $C_\varepsilon$ such that for all $M>0$, we have
\begin{equation}
\label{eq:division_of_double_sum}
\left| \sumlim{|K'|>\frac{2\pi}{L} M}{} \sumlim{|K| \leq \frac{2 \pi}{L}M}{} \frac{\widehat{\psi}_K^* e^{-iK \cdot R_I} e^{iK' \cdot (R_I-R_J)}}{|K'|^4 |K-K'|^2}  = \sumlim{|K| \leq \frac{2 \pi}{L}M}{} \widehat{\psi}_K^* e^{-iK \cdot R_I} \sumlim{|K'|>\frac{2\pi}{L} M}{} \frac{e^{i K'\cdot (R_I-R_J)}}{|K'|^6} \right| \leq  \frac{C_\varepsilon}{M^{4-\varepsilon}} ,
\end{equation}
to complete the proof. 

We have
\begin{align}
\left| \sumlim{|K'|>\frac{2\pi}{L}M}{} \sumlim{|K|\leq \frac{2\pi}{L}M}{} \right.  & \left. \frac{\widehat{\psi}_K e^{i K' \cdot(R_I-R_J) } e^{-iK \cdot R_I}}{|K'|^4 |K-K'|^2} - \sumlim{|K|\leq \frac{2\pi}{L}M}{} \widehat{\psi}_K e^{-iK \cdot R_I} \sumlim{|K'| > \frac{2\pi}{L}M}{} \frac{e^{i K' \cdot(R_I-R_J) } }{|K'|^6 } \right| \nonumber \\
& \leq \sumlim{|K'|>\frac{2\pi}{L}M}{} \sumlim{|K|\leq \frac{2\pi}{L}M}{} \frac{|\widehat{\psi}_K|}{|K'|^6 |K'-K|^2} \left|2 |K'||K| \cos(K,K') - |K|^2 \right| \nonumber\\
& \leq 3 \sumlim{|K'|>\frac{2\pi}{L}M}{} \sumlim{|K|\leq \frac{2\pi}{L}M}{} \frac{|\widehat{\psi}_K||K| }{|K'|^5 |K'-K|^2} \nonumber \\
& \leq 3 \left(  \sumlim{|K|\leq \frac{2\pi}{L}M}{} |K|^{5-\varepsilon} |\widehat{\psi}_K|^2  \sumlim{|K'|>\frac{2\pi}{L}M}{} \frac{1}{|K'|^5} \right)^{1/2} \left(  \sumlim{|K|\leq \frac{2\pi}{L}M}{} \sumlim{|K'|>\frac{2\pi}{L}M}{} \frac{1}{|K|^{3-\varepsilon} |K-K'|^4 |K'|^5}  \right)^{1/2} \nonumber\\
& \leq \frac{C_\varepsilon}{M}  \left(  \sumlim{|K|\leq \frac{2\pi}{L}M}{} \sumlim{|K'|>\frac{2\pi}{L}M}{} \frac{1}{|K|^{3-\varepsilon} |K-K'|^4 |K'|^5}  \right)^{1/2}. \label{eq:last_estimate}
\end{align}
For clarity, we set $L=2\pi$. We split the double sum into 3 parts:
\begin{multline*}
\sumlim{|K'|> M}{} \sumlim{|K|\leq  M}{} = \sumlim{|K'|> M  }{} \sumlim{|K| \leq  M-M^{\alpha_0}}{} 
+ \sumlim{|K'|> M+M^{\alpha_0} }{} \sumlim{ M-M^{\alpha_0}  \leq |K| \leq  M}{} + \sumlim{ M < |K'| \leq M+M^{\alpha_0}}{} \sumlim{M - M^{\alpha_0} < |K|\leq M }{},
\end{multline*}
where $0 < \alpha_0 < 1$ will be determined later. 

For the first and the second double sum, using that $|K-K'|\geq M^{\alpha_0}$, we have
\begin{align}
 \sumlim{|K'|> M  }{} \sumlim{|K| \leq M-M^{\alpha_0}}{} \frac{1}{|K|^{3-\varepsilon} |K-K'|^4 |K'|^5} & \leq \frac{C_\varepsilon}{M^{2+4\alpha_0-\varepsilon}} \label{eq:division1},
\end{align}
and 
\begin{align}
 \sumlim{|K'| > M+M^{\alpha_0}}{} \sumlim{M-M^{\alpha_0}  \leq |K| \leq  M}{}  \frac{1}{|K|^{3-\varepsilon} |K-K'|^4 |K'|^5} & \leq \frac{C_\varepsilon}{M^{3+3\alpha_0-\varepsilon}} \label{eq:division2},
\end{align}
where $C_\varepsilon$ is a constant independent of $M$.

The last sum is split into several double sums
\begin{equation}
\label{eq:division_master}
\begin{split}
\sumlim{ M < |K'| \leq M+M^{\alpha_0}}{} \sumlim{M - M^{\alpha_0} < |K|\leq M }{} = \sumlim{M - M^{\alpha_0} < |K|\leq M}{} \sumlim{ \substack{M < |K'| \leq M+M^{\alpha_0} \\ |K-K'| \geq M^{\alpha_0}} }{} + \sumlim{M - M^{\alpha_0} < |K|\leq M}{} \sumlim{ \substack{M < |K'| \leq M+M^{\alpha_0} \\ M^{\alpha_1} \leq |K-K'| < M^{\alpha_0}} }{} \\
+ \sumlim{M - M^{\alpha_1} < |K|\leq M}{} \sumlim{ \substack{M < |K'| \leq M+M^{\alpha_1} \\ M^{\alpha_2} \leq |K-K'| < M^{\alpha_1}} }{} + \qquad \cdots \qquad +  \sumlim{M - M^{\alpha_{n-1}} < |K|\leq M}{} \sumlim{ \substack{M < |K'| \leq M+M^{\alpha_{n-1}} \\ M^{\alpha_n} \leq |K-K'| < M^{\alpha_{n-1}}} }{} \\
 + \sumlim{M - M^{\alpha_{n}} < |K|\leq M}{} \sumlim{ \substack{M < |K'| \leq M+M^{\alpha_{n}} \\  |K-K'| \leq M^{\alpha_{n}}} }{},
\end{split}
\end{equation}
where $0 < \alpha_n < \dots < \alpha_0 < 1$ will be determined later. 
The first term in \eqref{eq:division_master} can be estimated by
\begin{equation}
    \label{eq:1st_condition}
    \sumlim{M - M^{\alpha_0} < |K|\leq M}{} \sumlim{ \substack{M < |K'| \leq M+M^{\alpha_0} \\ |K-K'| \geq M^{\alpha_0}} }{} \frac{1}{|K|^{3-\varepsilon} |K-K'|^4 |K'|^5} \leq \frac{C_{\alpha_0,M_0}}{M^{4+2\alpha_0-\varepsilon}},
\end{equation}
where $C_{\alpha_0,M_0}$ is a constant independent of $M$.
The last term in \eqref{eq:division_master} can be bounded by
\begin{equation}
\label{eq:last_condition}
    \sumlim{M - M^{\alpha_{n}} < |K|\leq M}{} \sumlim{ \substack{M < |K'| \leq M+M^{\alpha_{n}} \\  |K-K'| \leq M^{\alpha_{n}}} }{}\frac{1}{|K|^{3-\varepsilon} |K-K'|^4 |K'|^5} \leq \frac{C_{\alpha_n,M_0}}{M^{6-4\alpha_n-\varepsilon}}.
\end{equation}
For $k=1,\dots,n$, we have
\begin{equation}
\label{eq:remaining_condition}
    \sumlim{M - M^{\alpha_{k-1}} < |K|\leq M}{} \sumlim{ \substack{M < |K'| \leq M+M^{\alpha_{k-1}} \\ M^{\alpha_k} \leq |K-K'| < M^{\alpha_{k-1}}} }{}\frac{1}{|K|^{3-\varepsilon} |K-K'|^4 |K'|^5} \leq \frac{C_{\alpha_k,M_0}}{M^{6+4\alpha_k-4\alpha_{k-1}-\varepsilon}}.
\end{equation}
We will show that the sequence $(\alpha_k)_{0 \leq k \leq n}$ obtained by setting the exponents in \eqref{eq:1st_condition} to \eqref{eq:remaining_condition} to be equal, \emph{i.e.}
\[
4+2\alpha_0-\varepsilon = 6+4\alpha_1-4\alpha_{0}-\varepsilon = \dots = 6+4\alpha_n-4\alpha_{n-1}-\varepsilon  = 6-4\alpha_n-\varepsilon,
\]
satisfies 
\begin{itemize}
    \item for all $n$, $(\alpha_k)_{0 \leq k \leq n}$ is a decreasing sequence with $\alpha_0 <1$ and $\alpha_n>0$;
    \item $\lim\limits_{n \to \infty} \alpha_0 = 1$, hence the exponent $\lim\limits_{n \to ]infty} 4+2\alpha_0-\varepsilon = 6-\varepsilon$.
\end{itemize}
The condition on the exponents can be equivalently written
\[
\left\lbrace \begin{aligned}
3 \alpha_0 - 2 \alpha_1 & = 1 && \\
\alpha_k -2\alpha_{k+1} + \alpha_{k+2} & = 0 && \forall \, 0 \leq k \leq n-2 \\
\alpha_{n-1} - 2 \alpha_n & = 0 &&
\end{aligned} \right.
\]
The sequence $(\alpha_k)_{0 \leq k \leq n}$ is a solution to a homogeneous linear difference equation, hence there exist $a,b \in \R$ such that $\alpha_k = a+bk$. Using the other conditions, we have $a=\frac{n+1}{n+3}$ and $b=-\frac{1}{n+3}$, thus $\alpha_k = \frac{n+1-k}{n+3}$. So the sequence $(\alpha_k)$ is decreasing with $\alpha_0 <1$ and $\alpha_n>0$ and $\lim\limits_{n \to \infty} \alpha_0 = 1$.

Hence, for all $\varepsilon>0$ and $M_0>0$, by choosing $n$ sufficiently large such that $2+4\alpha_0 \geq 6 - \varepsilon$, for all $M\geq M_0$, we have
\[
\sumlim{|K|\leq \frac{2\pi}{L}M}{} \sumlim{|K'|>\frac{2\pi}{L}M}{} \frac{1}{|K|^{3-\varepsilon} |K-K'|^4 |K'|^5} \leq \frac{C_{\varepsilon,M_0}}{M^{6-2\varepsilon}}.
\]
Inserting this estimate in \eqref{eq:last_estimate} finishes the proof of the lemma.
\end{proof}

\begin{lem}
\label{lem:abel}
Let $1\leq I \not=J \leq N_\mathrm{at}$. Then, there exists a positive constant $C$ independent of $M$ such that
\begin{equation}
\left| \sumlim{|K'|>M}{} \frac{e^{i K' \cdot(R_I-R_J) } }{|K'|^6 } \right|  \leq \frac{C}{M^4} .
\end{equation}
\end{lem}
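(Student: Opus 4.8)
The plan is to exploit the oscillation of $K'\mapsto e^{iK'\cdot(R_I-R_J)}$ through an Abel-summation (summation-by-parts) argument performed one Cartesian coordinate at a time; this extracts exactly the one extra power of $M$ beyond the trivial bound $\sum_{|K'|>M}|K'|^{-6}=O(M^{-3})$ that would otherwise be all one gets.

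As in the proof of Lemma~\ref{lem:double_sum_split} I take $L=2\pi$, so $\mathcal{R}^*=\Z^3$. Write $d:=R_I-R_J$ and $\bar d:=(d_2,d_3)$. Since $I\neq J$, the nuclei are distinct points of the torus $\R^3/\mathcal{R}$, hence $d\notin2\pi\Z^3$; after relabelling coordinates I may assume $\theta:=d_1$ satisfies $e^{i\theta}\neq1$, and I set $c_\theta:=2/|1-e^{i\theta}|<\infty$. Splitting $K'=(k_1,\bar K')\in\Z\times\Z^2$ and using $|K'|^2=k_1^2+|\bar K'|^2$ I would rewrite
\[
\sum_{|K'|>M}\frac{e^{iK'\cdot d}}{|K'|^6}=\sum_{\bar K'\in\Z^2}e^{i\bar K'\cdot\bar d}\,S(\bar K'),\qquad S(\bar K'):=\sum_{k_1^2+|\bar K'|^2>M^2}\frac{e^{ik_1\theta}}{(k_1^2+|\bar K'|^2)^3},
\]
so that it suffices to bound $\sum_{\bar K'\in\Z^2}|S(\bar K')|$.

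The key step is the per-column estimate $|S(\bar K')|\le C_\theta M^{-6}$ when $|\bar K'|\le M$, and $|S(\bar K')|\le C_\theta |\bar K'|^{-6}$ when $|\bar K'|>M$. For fixed $\bar K'$ the function $g(k_1):=(k_1^2+|\bar K'|^2)^{-3}$ is positive, even, nonincreasing on $[0,\infty)$ and tends to $0$, while the partial sums of the phases $e^{ik_1\theta}$ are bounded by $c_\theta$; Abel summation on each of the two rays $\{k_1\ge n_0\}$ and $\{k_1\le-n_0\}$ therefore yields $\big|\sum_{|k_1|\ge n_0}e^{ik_1\theta}g(k_1)\big|\le2c_\theta\,g(n_0)$ for every integer $n_0\ge1$. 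If $|\bar K'|\le M$, the $k_1$-summation set is exactly $\{|k_1|\ge n_0\}$ where $n_0$ is the least integer $>\rho:=\sqrt{M^2-|\bar K'|^2}$, and — this is the point that makes the whole argument work — $g(n_0)\le(\rho^2+|\bar K'|^2)^{-3}=M^{-6}$ uniformly in $\bar K'$; if $|\bar K'|>M$, the range is all of $\Z$, and I would isolate the term $k_1=0$ (equal to $|\bar K'|^{-6}$) and apply the ray bound with $n_0=1$ to the rest. Summing over $\bar K'$ then gives
\[
\Big|\sum_{|K'|>M}\frac{e^{iK'\cdot d}}{|K'|^6}\Big|\le C_\theta\Big(M^{-6}\,\#\{\bar K'\in\Z^2:|\bar K'|\le M\}+\sum_{\substack{\bar K'\in\Z^2\\|\bar K'|>M}}|\bar K'|^{-6}\Big)\le\frac{C'_\theta}{M^{4}},
\]
since a disc of radius $M$ contains $O(M^2)$ points of $\Z^2$, and $\sum_{|\bar K'|>M}|\bar K'|^{-6}\lesssim\int_M^\infty r^{-6}\,r\,\mathrm{d}r\lesssim M^{-4}$ by elementary shell counting in $\Z^2$.

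The only genuinely nontrivial point — and what I would regard as the main obstacle — is that $\{|K'|>M\}$ is not a product set, so the $k_1$-summation range honestly depends on $\bar K'$. The argument succeeds precisely because on the boundary of that range one has exactly $k_1^2+|\bar K'|^2=M^2$, which forces the Abel bound $g(n_0)$ to be uniformly of size $M^{-6}$ across all the $O(M^2)$ columns with $|\bar K'|\le M$; once this is in hand, only routine lattice-point counting remains.
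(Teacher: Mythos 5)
Your proof is correct and follows essentially the same strategy as the paper's: Abel summation (Dirichlet's test) along the $k_1$-direction in each column $\bar K'$, with the decisive observation that the boundary term of the summation-by-parts is uniformly of size $M^{-6}$ over the $O(M^2)$ columns meeting the sphere $|K'|=M$, plus routine lattice counting for the columns with $|\bar K'|>M$. The only difference is organizational — the paper truncates the sum at $|K'|=M^2$ and tracks the telescoping differences explicitly, whereas you invoke the monotonicity of $k_1\mapsto(k_1^2+|\bar K'|^2)^{-3}$ to sum each ray to infinity directly, which is a mild streamlining of the same argument.
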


\begin{proof}
For simplicity, we restrict ourself to the case $L=2\pi$. In that case, $\mathcal{R}^* = \Z^3$ and for $K \in \Z^3$, we denote $K=(k_1,k_2,k_3)$ its components.
The proof of this lemma relies on an Abel transformation and noticing that if $\theta \not= 0 \ [2\pi]$, then $\sumlim{k=n}{N} e^{ik\theta} = \frac{e^{in \theta}-e^{i(N+1)\theta} }{1-e^{i\theta}}$ is bounded independently of $n$ and $N$. 

Without loss of generality, we can assume that $(R_I-R_J) \cdot (1,0,0)^T \not= 0$. In the following, we denote by $\theta = R_I-R_J$. We have
\begin{align*}
\sumlim{|K|>M}{} \frac{e^{iK \cdot \theta}}{|K|^6} & = \sumlim{|K|>M^2}{}\frac{e^{iK \cdot \theta}}{|K|^6} + \sumlim{M <|K| \leq M^2}{} \frac{e^{iK \cdot \theta}}{|K|^6} \\
& = \sumlim{|K|>M^2}{}\frac{e^{iK \cdot \theta}}{|K|^6} + \sumlim{(k_2,k_3) \in B_M}{} \sumlim{k_1=\left\lfloor\sqrt{M^2-k_2^2-k_3^2} \right\rfloor}{\left\lfloor\sqrt{M^4-k_2^2-k_3^2} \right\rfloor} \frac{e^{iK \cdot \theta}}{|K|^6} + \sumlim{(k_2,k_3) \in B_{M^2} \setminus B_M}{} \sumlim{k_1=0}{\left\lfloor\sqrt{M^4-k_2^2-k_3^2} \right\rfloor} \frac{e^{iK \cdot \theta}}{|K|^6},
\end{align*}
where $B_R$ is the two-dimensional ball of radius $R$ and origin $0$. The sums are estimated separately.

First, using a sum-integral comparison, we have
\[
\left| \sumlim{|K|>M^2}{}\frac{e^{iK \cdot \theta}}{|K|^6} \right| \leq \sumlim{|K|>M^2}{}\frac{1}{|K|^6} \leq \int_{|x|>M^2-1} \frac{\mathrm{d}x}{|x|^6} \leq \frac{C}{M^6},
\]
where $C$ is a constant independent of $M$.

For the second sum,  using that $e^{iK \cdot \theta} = \sumlim{\ell=k_1}{M^4} e^{iK_\ell \cdot \theta} -\sumlim{\ell=k_1+1}{M^4} e^{iK_\ell \cdot \theta}$, where $K_\ell = (\ell,k_2,k_3)$, we have
\begin{align}
\sumlim{(k_2,k_3) \in B_M}{} \sumlim{k_1=\left\lfloor\sqrt{M^2-k_2^2-k_3^2} \right\rfloor}{\left\lfloor\sqrt{M^4-k_2^2-k_3^2} \right\rfloor} \frac{e^{iK \cdot \theta}}{|K|^6} & = \sumlim{(k_2,k_3) \in B_M}{} \sumlim{k_1=\left\lfloor\sqrt{M^2-k_2^2-k_3^2} \right\rfloor}{\left\lfloor\sqrt{M^4-k_2^2-k_3^2} \right\rfloor} \frac{1}{|K|^6}  \sumlim{\ell=k_1}{M^4} e^{iK_\ell \cdot \theta} \nonumber \\
& \qquad \qquad -\sumlim{(k_2,k_3) \in B_M}{} \sumlim{k_1=\left\lfloor\sqrt{M^2-k_2^2-k_3^2} \right\rfloor}{\left\lfloor\sqrt{M^4-k_2^2-k_3^2} \right\rfloor} \frac{1}{|K|^6}  \sumlim{\ell=k_1+1}{M^4} e^{iK_\ell \cdot \theta} \nonumber\\
& = \sumlim{(k_2,k_3) \in B_M}{} \frac{1}{\left|\left(\left\lfloor \sqrt{M^2-k_2^2-k_3^2} \right\rfloor,k_2,k_3\right)\right|^6} \sumlim{\ell=\left\lfloor\sqrt{M^2-k_2^2-k_3^2} \right\rfloor}{M^4} e^{iK_\ell \cdot \theta} \nonumber\\
& \qquad -  \sumlim{(k_2,k_3) \in B_M}{} \frac{1}{\left|\left(\left\lfloor \sqrt{M^4-k_2^2-k_3^2} \right\rfloor+1,k_2,k_3\right)\right|^6} \sumlim{\ell=\left\lfloor\sqrt{M^4-k_2^2-k_3^2} \right\rfloor+1}{M^4} e^{iK_\ell \cdot \theta} \label{eq:ugly_sum_after_abel} \\
& \qquad \qquad + \sumlim{(k_2,k_3) \in B_M}{} \sumlim{k_1=\left\lfloor\sqrt{M^2-k_2^2-k_3^2} \right\rfloor+1}{\left\lfloor\sqrt{M^4-k_2^2-k_3^2} \right\rfloor} \left( \frac{1}{|K|^6} - \frac{1}{|K_{k_1-1}|^6} \right) \sumlim{\ell=k_1}{M^4}  e^{i K_\ell \cdot \theta}.\nonumber
\end{align}
The first term in \eqref{eq:ugly_sum_after_abel} can be estimated as follows. $\frac{1}{\left|\left(\left\lfloor \sqrt{M^2-k_2^2-k_3^2} \right\rfloor,k_2,k_3\right)\right|^6}$ is equivalent to $\frac{1}{M^6}$ and $\sumlim{\ell=\left\lfloor\sqrt{M^2-k_2^2-k_3^2} \right\rfloor}{M^4} e^{iK_\ell \cdot \theta}$ can be bounded independently of $M$. Hence, there is a constant $C$ independent of $M$ such that
\[
\left| \sumlim{(k_2,k_3) \in B_M}{} \frac{1}{\left|\left(\left\lfloor \sqrt{M^2-k_2^2-k_3^2} \right\rfloor,k_2,k_3\right)\right|^6} \sumlim{\ell=\left\lfloor\sqrt{M^2-k_2^2-k_3^2} \right\rfloor}{M^4} e^{iK_\ell \cdot \theta} \right| \leq \frac{C}{M^4}.
\]
The second term in \eqref{eq:ugly_sum_after_abel} can be treated the same way. Finally, noticing that $\frac{1}{|K|^6} - \frac{1}{|K_{k_1-1}|^6}=\cO \left(\frac{1}{|K|^7} \right)$, we deduce that for a constant $C$ independent of $M$, we have
\[
\left| \sumlim{(k_2,k_3) \in B_M}{} \sumlim{k_1=\left\lfloor\sqrt{M^2-k_2^2-k_3^2} \right\rfloor+1}{\left\lfloor\sqrt{M^4-k_2^2-k_3^2} \right\rfloor+1} \left( \frac{1}{|K|^6} - \frac{1}{|K_{k_1-1}|^6} \right) \sumlim{\ell=k_1}{M^4-1}  e^{i K_\ell \cdot \theta} \right| \leq C \sumlim{|K|\geq M}{} \frac{1}{|K|^7} \leq \frac{C}{M^4}.
\]
This finishes the proof of the lemma.
%\[
%\left| \sumlim{(k_2,k_3) \in B_M}{} \sumlim{k_1=\left\lfloor\sqrt{M^2-k_2^2-k_3^2} \right\rfloor}{\left\lfloor\sqrt{M^4-k_2^2-k_3^2} \right\rfloor} \frac{e^{iK \cdot \theta}}{|K|^6} \right| \leq \frac{C}{M^4}.
%\]
\end{proof}

We have now all the necessary tools to prove Theorem \ref{thm:error_cancellation}.

\begin{proof}[Proof of Theorem~\ref{thm:error_cancellation}]
Let $E_M$ be an eigenvalue of the variational approximation \eqref{eq:discrete_eigenvalue_problem} and $E$ the corresponding exact eigenvalue. 
Let $\psi$ be an eigenfunction associated to $E$. 
By Lemma \ref{lem:E_M-E} and Lemma \ref{lem:remainder_explicited}, we know that 
\[
E_M - E =  -\frac{4 \pi}{|\Gamma|}\sumlim{I=1}{N_\mathrm{at}} Z_I \psi(R_I) \sumlim{|K'|> \frac{2 \pi}{L} M}{} \sumlim{|K| \leq \frac{2 \pi}{L} M}{} Z_I \frac{e^{-i K \cdot R_I} e^{iK' \cdot R_I}}{|K'-K|^2} \widehat{\psi}_K^*\widehat{\psi}_K' \left(1+ \cO \left( \frac{1}{M^{5/2-\varepsilon}} \right) \right)+ \cO \left( \frac{1}{M^{9/2-\varepsilon}} \right).
\]
By Lemma \ref{lem:double_sum_split}, for all $\varepsilon>0$ and $M_0>0$, there exists a constant $C_{\varepsilon,M_0}$ such that for all $M \geq M_0$ we have
\begin{align*}
\Bigg| E_M -E  - \frac{32 \pi^{2} }{|\Gamma|^{3/2}} &  \sumlim{I=1}{N_\mathrm{at}} Z_I \psi(R_I)\sumlim{J=1}{N_\mathrm{at}} Z_J \sumlim{|K|\leq \frac{2\pi}{L}M}{} \widehat{\psi}_K^* e^{-iK \cdot R_I} \sumlim{|K'| > \frac{2\pi}{L}M}{} \frac{e^{i K' \cdot(R_I-R_J) } }{|K'|^6 }  \left(1+ \cO \left( \frac{1}{M^{5/2-\varepsilon}} \right) \right) \Bigg| \\
& \leq \frac{C_{\varepsilon, M_0}}{M^{4-\varepsilon}}  
\end{align*}
Using Lemma \ref{lem:abel}, we get for all $M \geq M_0$
\begin{align*}
\Bigg| E_M -E - \frac{32 \pi^{2}}{|\Gamma|^{3/2}} & \sumlim{I=1}{N_\mathrm{at}} Z_I^2 \psi(R_I)\sumlim{|K|\leq \frac{2\pi}{L}M}{} \widehat{\psi}_K^* e^{-iK \cdot R_I} \sumlim{|K'| > \frac{2\pi}{L}M}{} \frac{1}{|K'|^6 }  \left(1+ \cO \left( \frac{1}{M^{5/2-\varepsilon}} \right) \right) \Bigg|  \leq \frac{C_{\varepsilon, M_0}}{M^{4-\varepsilon}}.
\end{align*}
We have
\[
\frac{1}{|\Gamma|^{1/2}} \sumlim{I=1}{N_\mathrm{at}} Z_I^2 \sumlim{|K|\leq \frac{\pi}{L}M}{} \widehat{\psi}_K^* e^{-iK \cdot R_I} = \sumlim{I=1}{N_\mathrm{at}} Z_I^2 (\Pi_M \psi)(R_I)^* = \sumlim{I=1}{N_\mathrm{at}} Z_I^2  \psi(R_I) + \cO \left( \frac{1}{M^{1-\varepsilon}} \right),
\]
since $H^{3/2+\varepsilon}_\mathrm{per}(\Gamma) \hookrightarrow C^{0}_\mathrm{per}(\Gamma) $ for all $\varepsilon>0$.
Moreover, by a sum-integral comparison, we deduce that
\[
\sumlim{|K'| > \frac{2\pi}{L}M}{} \frac{1}{|K'|^6 } = \frac{L^6}{(2\pi)^6} \sumlim{|(k_1,k_2,k_3)| > M}{} \frac{1}{|(k_1,k_2,k_3)|^6} = \frac{L^6}{16\pi^5} \frac{1}{3M^3} + \cO \left( \frac{1}{M^4} \right).
\]
Hence, for all $M \geq M_0$
\begin{align*}
\left| E_M - E - \frac{2L^{3}}{3 \pi^{3} M^3} \sumlim{I=1}{N_\mathrm{at}} Z_I^2 \psi(R_I)^2 \right| \leq \frac{C_{\varepsilon,M_0}}{M^{4-\varepsilon}} .
\end{align*}
\end{proof}

\section{Numerical tests}
\label{sec:numeric}

In this section, we present some numerical results for the Hamiltonian $H$ 
\begin{equation}
\label{eq:numerical_hamiltonian}
H = -\frac{1}{2} \Delta - \frac{Z}{\left|x-\frac{{R}}{2}\right|} - \frac{Z}{\left|x+\frac{{R}}{2}\right|} ,
\end{equation}
with periodic boundary conditions on the unit cell $[-\tfrac{L}{2},\tfrac{L}{2}]^3$ for $L=2$.

The eigenvalue problem $H \psi = E\psi$ for $H$ defined above is solved using a slightly different plane-wave discretization space 
\begin{equation*}
V_M = \left\lbrace \sumlim{|K|_\infty \leq \frac{2\pi}{L}M }{} c_K e_K \ \Big| \ c_{-K}=c_K^* \right\rbrace ,
\end{equation*}
where for $K=(k_1,k_2,k_3)$, $|K|_\infty = \max(|k_1|,|k_2|,|k_3|)$. Theorem~\ref{thm:error_cancellation} still holds, but with another error cancellation prefactor since
\[
\int_{\R^3 \setminus  \lbrace |x|_\infty \leq 1 \rbrace} \frac{1}{(x_1^2+x_2^2+x_3^2)^3} \, \mathrm{d}x_1 \, \mathrm{d}x_2 \, \mathrm{d}x_3 = \frac{1}{3} + \frac{5 \sqrt{2}}{2} \arctan(\tfrac{1}{\sqrt{2}})  =:A.
\]
Hence, the error estimate \eqref{eq:final_error_estimate} becomes
\begin{equation}
\label{eq:error_correction_cubic}
E_M -E = \frac{A L^3 Z^2}{2 \pi^4 } \frac{\psi(-\tfrac{R}{2})^2 +\psi(\tfrac{R}{2})^2}{M^3} + \cO \left( \frac{1}{M^{4-\varepsilon}} \right).
\end{equation}
In the following, the prefactor is denoted by $C(R,Z,L) = \frac{A L^3 Z^2}{2 \pi^4} \left( \psi(R/2)^2 + \psi(-R/2)^2 \right)$.
\newline

\begin{figure}[h!]
	\centering
    \begin{subfigure}[b]{0.4\textwidth}
        \includegraphics[width=\textwidth]{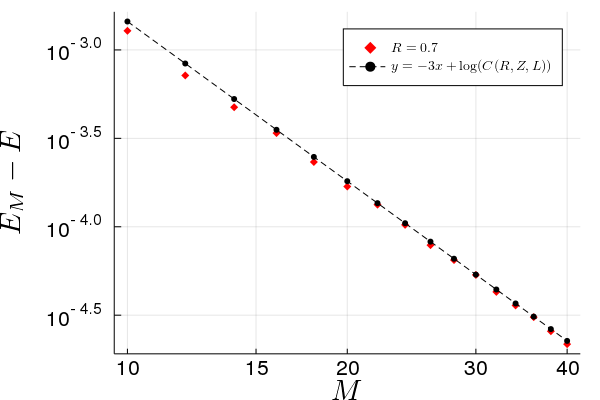}
    	\caption{$Z=2$}
    \end{subfigure}
	\quad
    \begin{subfigure}[b]{0.4\textwidth}
        \includegraphics[width=\textwidth]{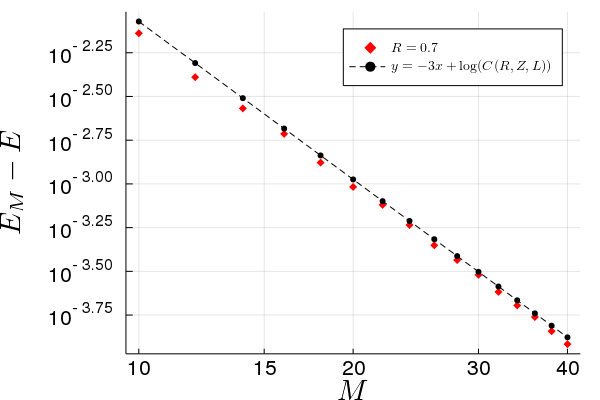}
    \caption{$Z=3$}
    \end{subfigure}
\caption{Error on the lowest eigenvalue of the plane-wave discretization}
\label{fig:error_compared_first_order}
\end{figure}

In Figure~\ref{fig:error_compared_first_order}, the discretization error $E_M-E$ is compared to the first order correction \eqref{eq:error_correction_cubic}. To compute the energy reference and the value $\psi(\tfrac{R}{2})$, the eigenvalue problem \eqref{eq:numerical_hamiltonian} is solved using a sufficiently large plane-wave cutoff ($M=100$). 
We notice a very good agreement between the error on the eigenvalue and the first order correction.

In Figure~\ref{fig:error_cancellation}, the discretization error cancellation for our simple model \eqref{eq:numerical_hamiltonian} is highlighted.  
In this case, the discretization error on the energy difference is simply
\[
D_M = \left| (E_{R_1,M}-E_{R_2,M}) -(E_{R_1} -E_{R_2}) \right|
\]
and the sum of the discretization errors on the lowest eigenvalue for both configurations is given by
\[
S_M = \left| E_{R_1,M} -E_{R_1} \right| + \left| E_{R_2,M} -E_{R_2} \right|.
\]
We can see that the discretization error $D_M$ converges as the same rate as the sum of the discretization errors $S_M$, however a prefactor of order $|R_2-R_1|$ is gained. 
\newline

\begin{figure}[h!]
	\centering
    \begin{subfigure}[b]{0.4\textwidth}
    	\includegraphics[width=\textwidth]{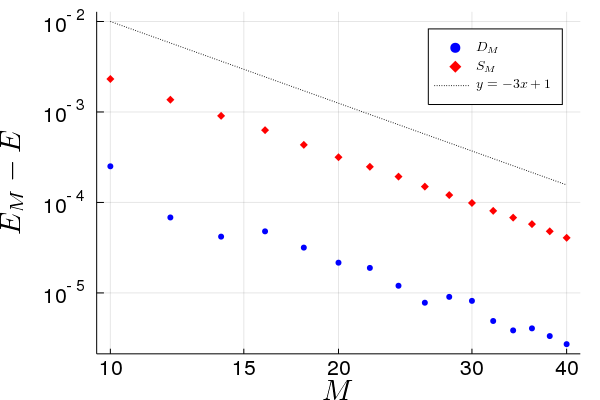}
        \caption{$Z=2$}
    \end{subfigure}
	\quad
    \begin{subfigure}[b]{0.4\textwidth}
    	\includegraphics[width=\textwidth]{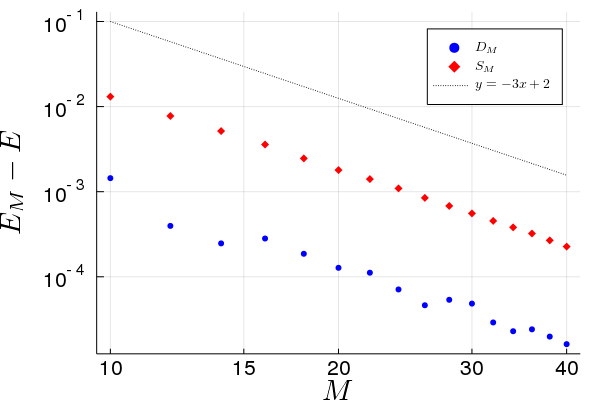}
        \caption{$Z=3$}
    \end{subfigure}
    \caption{Error on the computed energy difference $D_M$ compared to the total discretization error $S_M$.}
\label{fig:error_cancellation}
\end{figure}

In Figure~\ref{fig:error_correction_efficiency}, the error on the lowest eigenvalue with the first order correction~\eqref{eq:first_order_correction} is given. In our example, the convergence rate of the error is in accordance with the convergence rate given by Theorem~\ref{thm:error_cancellation}.
%In our case, the correction is more efficient for low plane-wave cut-off. This can be explained by the combination of the following facts
%\begin{itemize}
%\item as can be seen in Figure~\ref{fig:error_compared_first_order}, the first order correction overestimates the error on the eigenvalue,
%\item because of the cusps at the positions of the nuclei, the value of the exact eigenfunction is expected to be underestimated by its plane-wave approximation.  
%\end{itemize}

\begin{figure}[H]
	\centering
    \begin{subfigure}[b]{0.4\textwidth}
        \includegraphics[width=\textwidth]{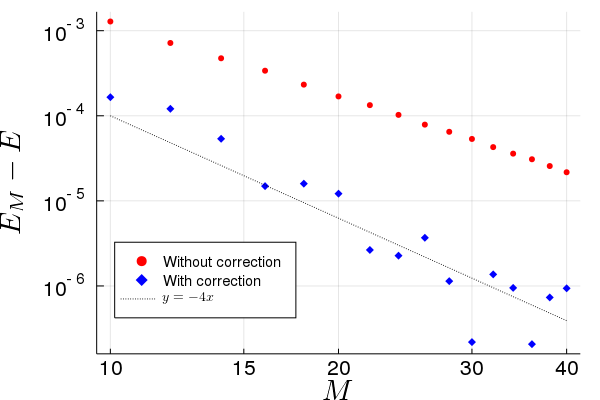}
    	\caption{$Z=2,\ R=0.7e_1$}
    \end{subfigure}
	\quad
    \begin{subfigure}[b]{0.4\textwidth}
        \includegraphics[width=\textwidth]{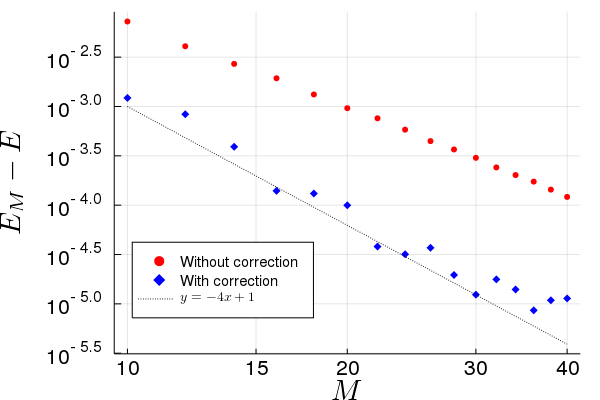}
    \caption{$Z=3,\ R=0.7e_1$}
    \end{subfigure}
    \caption{Error on the lowest eigenvalue of the plane-wave discretization with and without the first order correction.}
\label{fig:error_correction_efficiency}
\end{figure}

\section{Conclusion}

In the present work, discretization error cancellation has been analyzed for plane-wave discretization of a periodic Hamiltonian with Coulomb potentials. 
Using weighted Sobolev spaces and singular expansions of the eigenfunctions, an explicit formula of the leading term of the plane-wave convergence of the eigenvalue is proved. 
This yields a precise characterization of the error cancellation in agreement with our numerical tests. 

In \cite{cances2017discretization}, numerical tests of plane-wave methods on periodic Hamiltonian with pseudopotentials have suggested that error cancellation also occurs. 
The same analysis may be reproduced in that case, if the singularities on the eigenfunctions
due to the pseudopotential can be characterized.
It would be interesting to investigate the discretization error cancellation in other settings, for different models or different families of basis sets (finite elements, \dots). 

\newcommand{\etalchar}[1]{$^{#1}$}

\end{document}